\newcommand{\Z}{\mathbb Z}
\newcommand{\su}{\subseteq}
\newtheorem{thm}{Theorem}[section]
\newtheorem{lem}[thm]{Lemma}
\newtheorem{cor}[thm]{Corollary}
\newtheorem{obs}[thm]{Observation}
\newtheorem{rem}[thm]{Remark}
\theoremstyle{definition}
\newtheorem{defn}[thm]{Definition}
\theoremstyle{remark}
\newtheorem{case}{Case}[thm]
\author{Santanu Mondal, Krishnendu Paul, Shameek Paul%
\thanks{E-mail addresses: \texttt{santanu.mondal.math18@gm.rkmvu.ac.in, krishnendu.p.math18@gm.rkmvu.ac.in, shameek.paul@rkmvu.ac.in}}}
\affil{\small Ramakrishna Mission Vivekananda Educational and Research Institute, Belur, Dist. Howrah, 711202, India}
\date{}
\begin{document}
\baselineskip=14.5pt

\title {On unit weighted zero-sum constants of $\Z_n$}

\maketitle

\begin{abstract}
Given $A\su\Z_n$, the constant $C_A(n)$ is defined to be the smallest natural number $k$ such that any sequence of $k$ elements in $\Z_n$ has an $A$-weighted zero-sum subsequence having consecutive terms. The value of $C_{U(n)}(n)$ is known when $n$ is odd. We give a different argument to determine the value of $C_{U(n)}(n)$ for any $n$. A $C$-extremal sequence for $U(n)$ is a sequence in $\Z_n$ whose length is $C_{U(n)}(n)-1$ and which does not have any $U(n)$-weighted zero-sum subsequence having consecutive terms. We characterize the $C$-extremal sequences for $U(n)$ when $n$ is a power of 2. For any $n$, we determine the value of $C_A(n)$ where $A$ is the set of all odd (or all even) elements of $\Z_n$ and also when $A=\{1,2,\ldots,r\}$ where $r<n$. 
\end{abstract}

%\bigskip

%Keywords: Gao constant, Davenport constant, weighted zero-sum sequence

\vspace{.5cm}

\section{Introduction}\label{0}

We denote the number of elements in a finite set $S$ by $|S|$. In this paper, $R$ will denote a ring with unity. The next three definitions are given in \cite{SKS}. 

\begin{defn}
Let $M$ be an $R$-module and $A\su R$. A subsequence $T$ of a sequence $S=(x_1,x_2,\ldots, x_{k})$ in $M$ is called an {\it $A$-weighted zero-sum subsequence} if the set $I=\{i:x_i\in T\}$ is non-empty and for each $i\in I$, there exist $a_i\in A$ such that $\sum_{i\in I} a_i x_i = 0$. 
\end{defn}

\begin{defn} 
Given a finite $R$-module $M$ and $A\su R$, the $A$-weighted Davenport constant $D_A(M)$ is the least positive integer $k$ such that any sequence in $M$ of length $k$ has an $A$-weighted zero-sum subsequence. 
\end{defn}

\begin{defn}\label{ca}
Given a finite $R$-module $M$ and $A\su R$, the constant $C_A(M)$ is the least positive integer $k$ such that any sequence in $M$ of length $k$ has an $A$-weighted zero-sum subsequence of consecutive terms. 
\end{defn}

The next constant has been studied when $M=R=\Z_n$. Some results are given in \cite{A}, \cite{AR}, \cite{G}, \cite{L} and \cite{X}. 

\begin{defn}
Given a finite $R$-module $M$ and $A\su R$, the $A$-weighted Gao constant $E_A(M)$ is the least positive integer $k$ such that any sequence in $M$ of length $k$ has an $A$-weighted zero-sum subsequence of length $|M|$. 
\end{defn}

\begin{rem}
We denote the above three constants by $D(M),C(M)$ and $E(M)$ when $A=\{1\}$. 
For a finite $R$-module $M$, we have $D_A(M)\leq C_A(M)\leq |M|$ (see Section 1 of \cite{SKS}). Also, it follows easily  that $E_A(M)\geq D_A(M)+|M|-1$. 
\end{rem}

When $A\su \Z_n$, we denote the constants $D_A(\Z_n)$, $E_A(\Z_n)$ and $C_A(\Z_n)$ by $D_A(n)$, $E_A(n)$ and $C_A(n)$ respectively. Yuan and Zeng have shown in \cite{YZ} that $E_A(n)=D_A(n)+n-1$.

\begin{defn}
Let $A\su \Z_n$. A sequence in $\Z_n$ of length $E_A(n)-1$ which does not have any $A$-weighted zero-sum subsequence of length $n$, is called an $E$-extremal sequence for $A$. %A sequence in $\Z_n$ of length $D_A(n)-1$ which does not have any $A$-weighted zero-sum subsequence, is called a $D$-extremal sequence for $A$. 
A sequence in $\Z_n$ of length $C_A(n)-1$ which does not have any $A$-weighted zero-sum subsequence of consecutive terms, is called a $C$-extremal sequence for $A$.  
\end{defn}

We denote the ring $\Z/n\Z$ by $\Z_n$. Let $U(n)$ denote the group of units in $\Z_n$ and  $U(n)^k=\{\,x^k:x\in U(n)\,\}$. When  $n=p_1^{r_1}\ldots p_s^{r_s}$ where the $p_i$'s are distinct primes, we let $\Omega(n)=r_1+\cdots +r_s$. We use the notation $v_p(n)=r$ to mean $p^r\mid n$ and $p^{r+1}\nmid n$. For $a,b\in\Z$, we denote the set $\{k\in\Z:a\leq k\leq b\}$ by $[a,b]$.

In Corollary 4 of \cite{SKS} it has been shown that $C_{U(n)}(n)=2^{\Omega(n)}$ when $n$ is odd. In Theorem \ref{c2a} we show that $C(\Z_2^{^a})=2^a$ and as a consequence of this in Theorem \ref{cun} we show that for {\it any} $n$ we have $C_{U(n)}(n)=2^{\Omega(n)}$. Thus, we obtain a new proof of Corollary 4 of \cite{SKS} while also  generalizing it.

In Theorem \ref{d2a} we rederive the result that $D(\Z_2^{^a})=a+1$. This result is a special case of the main result of Olson in \cite{O}. We have included it here as our proof is very elementary. As a consequence of this we rederive the result that $D_{U(n)}=\Omega(n)+1$. This can also be arrived at by using the result of Yuan and Zeng in \cite{YZ}, as the value of $E_{U(n)}(n)$ has been obtained in \cite{G} and \cite{L}. 

We feel that it is of interest to derive the value of $D_{U(n)}(n)$ without using the value of $E_{U(n)}(n)$ since for other weight-sets $A\su\Z_n$, the values of $D_A(n)$ have been determined independently of the corresponding values of $E_A(n)$. Some of the other results in this article are the following. 

\begin{itemize}
\item
For a finite abelian group $G$ we have that $C(G)=|G|$. 

\item 
We characterize the $C$-extremal and $E$-extremal sequences for $U(n)$ when $n$ is a power of 2.

\item
Let $n$ be even, $v_2(n)=r$ and $m=n/2^r$. Let $B$ denote the set of all odd elements of $\Z_n$ and $\{m\}\su A\su B$. Then we have that $C_A(n)=2^r$.
%and $D_A(n)=r+1$.

\item 
Let $r\in [1,n-1]$ and $A=\{1,2,\ldots,r\}$. We show that $C_A(n)=\lceil n/r \rceil$.

\item If $p$ and $k$ are odd primes such that $p\equiv 1~(mod~k)$ and $p\not\equiv 1~(mod~k^2)$, then $D_{U(p)^k}(p)\leq k$. 
\end{itemize}

\section{Some general results}

\begin{thm}
Let $M$ be a finite $R$-module and $N$ be a  finite $R'$-module. Suppose $A\su R$ and $B\su R'$. Then $M\times N$ is a module over $R\times R'$ and we have 
$C_{A\times B}(M\times N)\geq C_A(M)\,C_B(N)$. 
\end{thm}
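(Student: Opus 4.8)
The plan is to show that if $T$ is a $C$-extremal sequence for $A$ in $M$ (of length $C_A(M)-1$) and $U$ is a $C$-extremal sequence for $B$ in $N$ (of length $C_B(N)-1$), then one can build a sequence $S$ in $M \times N$ of length $(C_A(M)-1)(C_B(N)-1)$ that has no $A \times B$-weighted zero-sum subsequence of consecutive terms; this immediately gives the bound. First I would write $T = (x_1, \ldots, x_p)$ with $p = C_A(M)-1$ and $U = (y_1, \ldots, y_q)$ with $q = C_B(N)-1$, and then define $S$ to be the concatenation of $q$ blocks, where the $j$-th block ($1 \le j \le q$) is $\bigl((x_1, y_j), (x_2, y_j), \ldots, (x_p, y_j)\bigr)$. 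So the $y$-coordinate is constant within each block and the blocks run through $y_1, \ldots, y_q$ in order; the total length is $pq$ as required.

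Next I would analyze an arbitrary nonempty subsequence of $S$ consisting of consecutive terms, say indexed by a contiguous interval $I$. Such an interval either lies entirely within one block $j$, or it straddles a block boundary and thus contains a full "tail" of some block $j$ together with a full "head" of block $j+1$ (and possibly entire blocks in between). In the first case, the projection onto $N$ of the candidate weighted sum is $\bigl(\sum_{i \in I} b_i\bigr) y_j$ for weights $b_i \in B$; but the multiset of indices used in the $y$-coordinate, restricted to the contiguous run inside block $j$, corresponds exactly to a contiguous subsequence of the constant-value situation — more to the point, the $x$-coordinates involved form a consecutive subsequence of $T$, so by $C$-extremality of $T$ for $A$ the $M$-coordinate cannot be made zero with $A$-weights. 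In the second (straddling) case, the $N$-coordinates that appear, read block-by-block, include all of $y_j, y_{j+1}, \ldots$ over some consecutive run of the $y$-indices; grouping the $A \times B$-weights appropriately, the $N$-coordinate of the sum becomes a $B$-weighted sum over a consecutive subsequence of $U$ (after noting that within a single block every term has the same $y$-value, so the sum of that block's $B$-weights acts as a single weight on that $y_j$, and weights in $B$ are not required to be distinct), which by $C$-extremality of $U$ for $B$ cannot vanish.

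The one point needing care — and the step I expect to be the main obstacle — is the bookkeeping in the straddling case: I must check that when a contiguous interval $I$ of $S$ partially covers the first and last blocks it meets, the set of $y$-indices $j$ for which $I$ contains at least one term of block $j$ is itself a contiguous interval of $[1,q]$, and that for each such $j$ the sum $\sum_{i} b_i$ over the $A\times B$-weights of the terms of $I$ in block $j$ is a legitimate single element of $B$ to attach to $y_j$ — this uses only that $B$ is a set of allowable weights with repetition permitted, which is exactly how the $A$-weighted zero-sum condition in Definition~1 is phrased (the $a_i$ need not be distinct). Once this is set up, the nonvanishing of the $N$-coordinate follows from $C_B(N)$-extremality of $U$ applied to the consecutive subsequence $(y_j)_{j}$ of $U$. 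Symmetrically, in the non-straddling case the nonvanishing of the $M$-coordinate follows from $C_A(M)$-extremality of $T$. Hence $S$ has no $A\times B$-weighted zero-sum subsequence of consecutive terms, so $C_{A\times B}(M\times N) > pq = (C_A(M)-1)(C_B(N)-1)$, i.e. $C_{A\times B}(M\times N) \ge C_A(M)\,C_B(N)$.
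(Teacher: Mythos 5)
Your construction does not work, for two separate reasons. First, the length is wrong: your sequence $S$ has length $pq=(C_A(M)-1)(C_B(N)-1)$, so even if it had no $A\times B$-weighted zero-sum subsequence of consecutive terms you would only get $C_{A\times B}(M\times N)\geq pq+1$, and $pq+1=(C_A(M)-1)(C_B(N)-1)+1$ is strictly smaller than $C_A(M)\,C_B(N)$ whenever $p,q\geq 1$ (for instance, with $C_A(M)=C_B(N)=2$ you would prove a lower bound of $2$ rather than $4$). To prove the theorem you need a sequence of length $C_A(M)\,C_B(N)-1=pq+p+q$ with no such subsequence.

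Second, the step you yourself flag as the main obstacle is genuinely false: in the straddling case the coefficient attached to $y_j$ is a \emph{sum} of several elements of $B$, and $B$ is an arbitrary subset of $R'$, not closed under addition, so this sum need not lie in $B$. A concrete failure: take $M=N=\Z_3$, $A=B=\{1\}$, $T=U=(1,1)$. Your $S$ is $\bigl((1,1),(1,1),(1,1),(1,1)\bigr)$, and the consecutive subsequence of length $3$ sums to $(0,0)$; your argument would need the coefficient $1+1=2$ on $y_2$ to be an allowed weight, but $2\notin\{1\}$. The paper's construction sidesteps both problems at once: it uses $q+1$ copies of the block $\bigl((x_1,0),\ldots,(x_p,0)\bigr)$ separated by the $q$ single terms $(0,y_1),\ldots,(0,y_q)$, giving length $pq+p+q$ as required. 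A consecutive subsequence either meets no separator, in which case projecting to $M$ contradicts the extremality of $T$, or it contains some separators $(0,y_j)$ for a consecutive run of $j$'s, each occurring exactly once with a single weight from $B$ while the block terms contribute $0$ to the second coordinate, so projecting to $N$ contradicts the extremality of $U$. You should replace your "constant $y$-coordinate within each block" design with this separator design.
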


\begin{proof}
Let $C_A(M)=k+1$ and $C_B(N)=l+1$. Suppose $S_1=(x_1,\ldots,x_k)$ is a sequence in $M$ which does not have any $A$-weighted zero-sum subsequence of consecutive terms and $S_2=(y_1,\ldots,y_l)$ is a sequence in $N$ which does not have any $B$-weighted zero-sum subsequence of consecutive terms. Consider the sequence $S$ in $M\times N$ defined as  
$$\big((x_1,0),\ldots,(x_k,0),(0,y_1),(x_1,0),\ldots,(x_k,0),(0,y_2),(x_1,0),\ldots,(x_k,0),(0,y_3),$$
$$.\,.\,.\,.\,,(x_1,0),\ldots,(x_k,0),(0,y_{l-1}),(x_1,0),\ldots,(x_k,0),(0,y_l),(x_1,0),\ldots,(x_k,0)\big).$$
Suppose $S$ has an $A\times B$-weighted zero-sum subsequence $T$ having consecutive terms. If no term of $T$ is of the form $(0,y_j)$ for any $j\in [1,l]$, then we get the contradiction that $S_1$ has an $A$-weighted zero-sum subsequence of consecutive terms. If $T$ has a term of the form $(0,y_j)$ for some $j\in [1,l]$, then by taking the projection to the second coordinate, we get the contradiction that $S_2$ has a $B$-weighted zero-sum subsequence of consecutive terms. Thus, we see that $S$ does not have any $A\times B$-weighted zero-sum subsequence of consecutive terms. Hence, we get that $C_{A\times B}(M\times N)\geq C_A(M)\,C_B(N)$. 
\end{proof}

By a similar argument we get the next result. 

\begin{thm}\label{proc}
Let $M$ and $N$ be $R$-modules and $A\su R$. Then $M\times N$ is an $R$-module and we have $C_A(M\times N)\geq C_A(M)\,C_A(N)$. 
\end{thm}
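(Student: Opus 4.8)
The plan is to mimic the construction from the preceding theorem, but now working with a single ring $R$ acting on both factors, so the weight-set $A$ is the same on each side. Let $C_A(M)=k+1$ and $C_A(N)=l+1$, and fix sequences $S_1=(x_1,\ldots,x_k)$ in $M$ with no $A$-weighted zero-sum subsequence of consecutive terms, and $S_2=(y_1,\ldots,y_l)$ in $N$ with the same property. First I would write down the sequence $S$ in $M\times N$ of length $k(l+1)+l = kl+k+l$ obtained by concatenating $l+1$ blocks of the form $((x_1,0),\ldots,(x_k,0))$, separated by the $l$ ``divider'' terms $(0,y_1),\ldots,(0,y_l)$ — exactly the sequence written in the proof of the previous theorem.

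Next I would suppose, for contradiction, that $S$ has an $A$-weighted zero-sum subsequence $T$ of consecutive terms, and split into the same two cases. If $T$ contains no divider term $(0,y_j)$, then $T$ lies entirely within one block, i.e. it is a block of consecutive terms of $S_1$ (embedded in the first coordinate), and projecting to the first coordinate yields an $A$-weighted zero-sum subsequence of consecutive terms of $S_1$, a contradiction. If $T$ does contain some divider $(0,y_j)$, then I would project $T$ to the second coordinate: the surviving terms are precisely those $(0,y_j)$ lying in the index-range of $T$, and since $T$ has consecutive terms in $S$ and the dividers appear in order $y_1,y_2,\ldots,y_l$ with full blocks between them, these surviving $y_j$'s form a run of consecutive terms of $S_2$. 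The weights $a_i\in A$ attached to these terms give an $A$-weighted zero-sum subsequence of consecutive terms of $S_2$, again a contradiction. Hence $S$ has no $A$-weighted zero-sum subsequence of consecutive terms, so $C_A(M\times N)\geq \operatorname{length}(S)+1 = kl+k+l+1 = (k+1)(l+1) = C_A(M)\,C_A(N)$.

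The only point needing a little care — and the natural place to state ``by a similar argument'' honestly — is the second case: one must check that the second-coordinate projection of a \emph{consecutive} block of $S$ really is a \emph{consecutive} block of $S_2$, not a scattered subsequence. This is exactly where the placement of one full copy of $S_1$ between every pair of consecutive dividers (and also before the first and after the last) is used: a run of consecutive terms of $S$ can only pick up dividers with contiguous indices $j, j+1, \ldots$, because it cannot ``skip'' a divider. Since this is structurally identical to the argument already given in full for $C_{A\times B}(M\times N)$, I expect no real obstacle; the write-up is essentially a one-line invocation of the previous proof with $R=R'$ and $A=B$, together with the observation that the contradiction in the first case uses $A$-weighted zero-sums of $S_1$ and in the second case uses $A$-weighted zero-sums of $S_2$, both now with the same weight-set $A$.
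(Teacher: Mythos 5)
Your proposal is correct and is precisely the argument the paper intends: the paper proves the $A\times B$ version in full and then states Theorem \ref{proc} ``by a similar argument,'' which is exactly your specialization to $R=R'$ and $A=B$, including the length count $(k+1)(l+1)-1$ and the check that the dividers picked up by a consecutive block of $S$ have contiguous indices.
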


\begin{thm}
Let $M$ be a finite $R$-module and $N$ be a  finite $R'$-module. Suppose $A\su R$ and $B\su R'$. Then $M\times N$ is a module over $R\times R'$ and we have 
$D_{A\times B}(M\times N)\geq D_A(M)+D_B(N)-1$. 
\end{thm}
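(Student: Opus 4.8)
The plan is to construct a single sequence in $M\times N$ of length $D_A(M)+D_B(N)-2$ that has no $A\times B$-weighted zero-sum subsequence, following the familiar concatenation-of-witnesses strategy used in additive combinatorics. First I would invoke the definitions of $D_A(M)$ and $D_B(N)$ to fix sequences $S_1=(x_1,\ldots,x_k)$ in $M$ with $k=D_A(M)-1$ having no $A$-weighted zero-sum subsequence, and $S_2=(y_1,\ldots,y_l)$ in $N$ with $l=D_B(N)-1$ having no $B$-weighted zero-sum subsequence. Then I would form the sequence
$$S=\big((x_1,0),\ldots,(x_k,0),(0,y_1),\ldots,(0,y_l)\big)$$
of length $k+l=D_A(M)+D_B(N)-2$ in the $R\times R'$-module $M\times N$.

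The core of the argument is to show $S$ has no $A\times B$-weighted zero-sum subsequence. Suppose $T$ is such a subsequence, with index set $I$ non-empty, and for each $i\in I$ a weight $(a_i,b_i)\in A\times B$ with $\sum_{i\in I}(a_i,b_i)\cdot(\text{term}_i)=(0,0)$. Splitting $I$ into the indices coming from the first block (call it $I_1$, terms of the form $(x_i,0)$) and the second block ($I_2$, terms of the form $(0,y_j)$), the first coordinate of the sum is $\sum_{i\in I_1}a_i x_i$ and the second coordinate is $\sum_{j\in I_2}b_j y_j$; here I use that the unused coordinate of each term is $0$, so the weight chosen there is irrelevant. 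The zero-sum condition forces $\sum_{i\in I_1}a_ix_i=0$ in $M$ and $\sum_{j\in I_2}b_jy_j=0$ in $N$. Since $I$ is non-empty, at least one of $I_1,I_2$ is non-empty, and in that case we obtain either an $A$-weighted zero-sum subsequence of $S_1$ or a $B$-weighted zero-sum subsequence of $S_2$, contradicting the choice of $S_1$ or $S_2$. Hence no such $T$ exists, so $D_{A\times B}(M\times N)> k+l = D_A(M)+D_B(N)-2$, which gives the claimed inequality.

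I do not anticipate a serious obstacle here; unlike the $C$-constant version, we do not need consecutivity, so no interleaving of the $x$-block is required and the bookkeeping is simpler. The one point worth stating carefully is that a weight on a term whose relevant coordinate vanishes contributes nothing, so the decomposition of the coordinatewise sum is clean and each nonzero contribution genuinely yields a weighted zero-sum subsequence in the respective factor. It is also worth remarking, as the paper does for the analogous $C$-results, that this construction shows the bound $D_{A\times B}(M\times N)\geq D_A(M)+D_B(N)-1$ while the reverse inequality need not hold in general.
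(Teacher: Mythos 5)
Your proposal is correct and follows essentially the same approach as the paper: the same concatenated sequence $\big((x_1,0),\ldots,(x_k,0),(0,y_1),\ldots,(0,y_l)\big)$ of length $D_A(M)+D_B(N)-2$, with the same coordinatewise analysis showing it has no $A\times B$-weighted zero-sum subsequence. Your splitting of the index set into $I_1$ and $I_2$ is just a slightly more explicit rendering of the paper's case distinction on whether $T$ contains a term of the form $(0,y_j)$.
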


\begin{proof}
Let $D_A(M)=k+1$ and $D_B(N)=l+1$. Suppose $S_1=(x_1,\ldots,x_k)$ is a sequence in $M$ which does not have any $A$-weighted zero-sum subsequence and $S_2=(y_1,\ldots,y_l)$ is a sequence in $N$ which does not have any $B$-weighted zero-sum subsequence. Let $S$ be the sequence in $M\times N$ defined as $$\big((x_1,0),(x_2,0),\ldots,(x_{k-1},0),(x_k,0),(0,y_1),(0,y_2),\ldots,(0,y_{l-1}),(0,y_l)\big).$$
Suppose $S$ has an $A\times B$-weighted zero-sum subsequence $T$. If no term of $T$ is of the form $(0,y_j)$ for any $j\in [1,l]$, then we get the contradiction that $S_1$ has an $A$-weighted zero-sum subsequence. If $T$ has a term of the form $(0,y_j)$ for some $j\in [1,l]$, then by taking the projection to the second coordinate, we get the contradiction that $S_2$ has a $B$-weighted zero-sum subsequence. Thus, we see that $S$ does not have any $A\times B$-weighted zero-sum subsequence. Hence, we get that $D_{A\times B}(M\times N)\geq D_A(M)+D_B(N)-1$. 
\end{proof}

By a similar argument we get the next result. 

\begin{thm}\label{sumd}
Let $M$ and $N$ be $R$-modules and $A\su R$. Then $M\times N$ is an $R$-module and we have $D_A(M\times N)\geq D_A(M)+D_A(N)-1$. 
\end{thm}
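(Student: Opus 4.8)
The plan is to repeat, essentially verbatim, the construction used in the proof of the immediately preceding theorem, but with the componentwise action of $A\times B$ on $R\times R'$ replaced by the diagonal componentwise action of a single $A\su R$ on $M\times N$, i.e. $a\cdot(m,n)=(am,an)$. First I would write $D_A(M)=k+1$ and $D_A(N)=l+1$, and fix a sequence $S_1=(x_1,\ldots,x_k)$ in $M$ with no $A$-weighted zero-sum subsequence and a sequence $S_2=(y_1,\ldots,y_l)$ in $N$ with no $A$-weighted zero-sum subsequence. Then I would form the concatenated sequence
$$S=\big((x_1,0),(x_2,0),\ldots,(x_k,0),(0,y_1),(0,y_2),\ldots,(0,y_l)\big)$$
in $M\times N$, which has length $k+l$.

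Next I would argue by contradiction: suppose $S$ has an $A$-weighted zero-sum subsequence $T$, say with nonempty index set $I$ and weights $a_i\in A$ for $i\in I$ satisfying $\sum_{i\in I}a_i s_i=0$ in $M\times N$, where $s_i$ denotes the $i$-th term of $S$. I would split into two cases. If no term of $T$ is of the form $(0,y_j)$, then projecting this relation onto the first coordinate yields a (nonempty) $A$-weighted zero-sum subsequence of $S_1$, a contradiction. If some term of $T$ equals $(0,y_j)$, then projecting onto the second coordinate annihilates every $(x_i,0)$ term and leaves a nonempty $A$-weighted zero-sum subsequence of $S_2$, again a contradiction. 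Hence $S$ has no $A$-weighted zero-sum subsequence, so $D_A(M\times N)\ge k+l+1=D_A(M)+D_A(N)-1$.

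There is essentially no genuine obstacle here — this is exactly the ``similar argument'' the paper alludes to. The only point requiring a moment's care is verifying that in the second case the index set of the projected subsequence is still nonempty; this holds precisely because $T$ was assumed to contain at least one term $(0,y_j)$, and the weight $a_i$ attached to it need not be a zero divisor for the projected sum to be a legitimate weighted zero-sum (nonemptiness of the index set is all that the definition demands).
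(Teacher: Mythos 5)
Your proof is correct and is exactly the ``similar argument'' the paper has in mind: it reuses the concatenation construction from the preceding theorem with the single weight-set $A$ acting diagonally, and the two-case projection argument goes through verbatim. Nothing further is needed.
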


\section{$C_{U(n)}(n)$}

\begin{thm}\label{c2a}
$C(\Z_2^{^a})=2^a$.  
\end{thm}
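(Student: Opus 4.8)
The plan is to prove both bounds $C(\Z_2^a) \geq 2^a$ and $C(\Z_2^a) \leq 2^a$ separately.

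For the lower bound, I would proceed by induction on $a$, using Theorem~\ref{proc} as the engine. The base case $a=1$ is immediate: the single-term sequence $(1)$ in $\Z_2$ has no zero-sum subsequence of consecutive terms, so $C(\Z_2)\geq 2$, and in fact $C(\Z_2)=2$ since any sequence of length $2$ in $\Z_2$ either contains a $0$ or consists of two $1$'s, and in both cases a consecutive zero-sum subsequence exists. For the inductive step, writing $\Z_2^a = \Z_2^{a-1}\times \Z_2$ and applying Theorem~\ref{proc} with $A=\{1\}$ gives $C(\Z_2^a)\geq C(\Z_2^{a-1})\,C(\Z_2) \geq 2^{a-1}\cdot 2 = 2^a$. (Alternatively one can exhibit an explicit $C$-extremal sequence of length $2^a-1$ directly, but the product construction in Theorem~\ref{proc} already does exactly this.)

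For the upper bound, I must show that every sequence $S=(x_1,\dots,x_{2^a})$ of length $2^a$ in $\Z_2^a$ has a zero-sum subsequence of \emph{consecutive} terms. Consider the $2^a$ consecutive-block partial sums $s_0=0$, $s_j = x_1+\cdots+x_j$ for $j\in[1,2^a]$; these are $2^a+1$ elements of the group $\Z_2^a$, which has exactly $2^a$ elements. By pigeonhole there exist indices $0\leq i<j\leq 2^a$ with $s_i=s_j$, and then $x_{i+1}+\cdots+x_j = s_j-s_i = 0$, giving a consecutive zero-sum subsequence (non-empty since $i<j$). This establishes $C(\Z_2^a)\leq 2^a$, and combined with the lower bound we get equality.

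The main point to be careful about is that the upper-bound pigeonhole argument actually shows $C(G)\leq |G|$ for \emph{any} finite abelian group $G$ (this is presumably the ``$C(G)=|G|$'' bullet mentioned in the introduction), so all the content is in the lower bound; the only genuine obstacle there is making sure Theorem~\ref{proc} is being applied with the correct weight set, namely $A=\{1\}$ on both factors, so that $C_A$ specializes to the unweighted constant $C$. No delicate estimates are needed — the whole proof is pigeonhole plus one invocation of the product inequality.
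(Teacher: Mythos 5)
Your proof is correct and follows essentially the same route as the paper: the lower bound is obtained exactly as in the paper, by induction on $a$ together with Theorem~\ref{proc} applied with $A=\{1\}$, $M=\Z_2^{^{a-1}}$ and $N=\Z_2$. The only difference is that for the upper bound the paper simply cites Theorem 1 of \cite{SKS} (which states $C_A(M)\leq |M|$), whereas you reprove that bound inline via the standard partial-sums pigeonhole argument; this is a correct and self-contained substitute for the citation, not a genuinely different approach.
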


\begin{proof}
We will prove that $C(\Z_2^{^a})\geq 2^a$ by using induction on $a$. This is clear when $a=1$. For $a>1$, by the induction hypothesis we have $C(\Z_2^{^{a-1}})\geq 2^{a-1}$. Taking $A=\{1\}$, $M=\Z_2^{^{a-1}}$ and $N=\Z_2$ in Theorem \ref{proc} gives us $C(\Z_2^{^a})\geq 2^a$. Also, from Theorem 1 of \cite{SKS} we get $C(\Z_2^{^a})\leq 2^a$. 
\end{proof}

\begin{thm}\label{cab}
 For any finite abelian group $G$, we have that $C(G)=|G|$. 
\end{thm}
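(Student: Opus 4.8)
The plan is to prove the two inequalities $C(G)\le |G|$ and $C(G)\ge |G|$ separately. The upper bound $C(G)\le |G|$ is immediate: any sequence of length $|G|=|M|$ already has a zero-sum subsequence of consecutive terms, by the general bound $C_A(M)\le |M|$ recorded in the opening remarks (applied with $A=\{1\}$). So the entire content is the lower bound, for which I must exhibit a sequence in $G$ of length $|G|-1$ with no nonempty consecutive-terms zero-sum subsequence.

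The idea is to order the group elements cleverly and take partial sums. First I would recall that a sequence $S=(x_1,\ldots,x_k)$ has a zero-sum subsequence of consecutive terms iff two of the partial sums $s_0=0,\ s_1=x_1,\ s_2=x_1+x_2,\ \ldots,\ s_k=x_1+\cdots+x_k$ coincide (since $x_{i+1}+\cdots+x_j=s_j-s_i$). Hence it suffices to produce $x_1,\ldots,x_{|G|-1}$ whose $|G|$ partial sums $s_0,\ldots,s_{|G|-1}$ are pairwise distinct — equivalently, an ordering $g_0=0,g_1,g_2,\ldots,g_{|G|-1}$ of all elements of $G$, from which we recover $x_i=g_i-g_{i-1}$. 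So the statement reduces to: every finite abelian group admits an ordering of its elements starting at $0$ in which consecutive elements are always distinct — which is trivially true, just list the elements in any order starting with $0$. Thus $C(G)\ge |G|$.

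Wait — I should double-check the edge case: the consecutive subsequence must be nonempty, and in the partial-sum reformulation a nonempty block $(x_{i+1},\ldots,x_j)$ with $i<j$ corresponds to $s_i=s_j$ with $i<j$; distinctness of all partial sums rules this out, and there is no issue with the empty subsequence since our definition of $A$-weighted zero-sum subsequence requires $I\ne\emptyset$. So the argument is complete as stated. In writing it up I would present the partial-sum lemma cleanly, then observe that any bijective listing $g_0=0,g_1,\ldots,g_{|G|-1}$ of $G$ does the job, and finally combine with the trivial upper bound. I expect no real obstacle here; the only thing to be careful about is matching the indexing conventions in Definition \ref{ca} (subsequence of consecutive terms, nonempty) so that the partial-sum translation is airtight.

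\begin{proof}
Since $C_{\{1\}}(G)\leq |G|$ by the remark in Section \ref{0}, it remains to show $C(G)\geq |G|$, i.e. to exhibit a sequence in $G$ of length $|G|-1$ with no zero-sum subsequence of consecutive terms.

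Fix any enumeration $g_0,g_1,\ldots,g_{|G|-1}$ of the elements of $G$ with $g_0=0$, and consider the sequence $S=(x_1,\ldots,x_{|G|-1})$ in $G$ defined by $x_i=g_i-g_{i-1}$ for $i\in[1,|G|-1]$. For $0\leq i\leq |G|-1$ let $s_i=x_1+\cdots+x_i$ (with $s_0=0$); then $s_i=g_i$ for all such $i$, so the partial sums $s_0,s_1,\ldots,s_{|G|-1}$ are pairwise distinct. Now suppose $S$ had a zero-sum subsequence of consecutive terms, say $(x_{i+1},x_{i+2},\ldots,x_j)$ with $0\leq i<j\leq |G|-1$. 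Then $0=x_{i+1}+\cdots+x_j=s_j-s_i$, forcing $s_i=s_j$ with $i\neq j$, a contradiction. Hence $S$ has no zero-sum subsequence of consecutive terms, and therefore $C(G)\geq |G|$. Combining the two inequalities gives $C(G)=|G|$.
\end{proof}
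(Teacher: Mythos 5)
Your proof is correct, but it takes a genuinely different route from the paper. The paper obtains the lower bound $C(G)\geq |G|$ by induction on the rank of $G$: it combines the cyclic case (Corollary 1 of \cite{SKS}) with the product inequality $C_A(M\times N)\geq C_A(M)\,C_A(N)$ of Theorem \ref{proc}, so the argument rides on the structure theorem for finite abelian groups and on machinery the paper has already set up for other purposes. You instead give a direct, self-contained construction: enumerate $G$ as $g_0=0,g_1,\ldots,g_{|G|-1}$ and take the difference sequence $x_i=g_i-g_{i-1}$, whose partial sums are exactly the $g_i$ and hence pairwise distinct, so no consecutive block sums to zero. This is the same partial-sum idea that underlies the cyclic base case, but you observe it needs no decomposition into cyclic factors at all. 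Your argument is more elementary (no induction, no product theorem), produces an explicit extremal sequence of length $|G|-1$ for every $G$ (indeed it shows every such sequence arises as the difference sequence of an enumeration starting at $0$, which is a small bonus the paper's proof does not give), and your translation between consecutive zero-sum blocks and coincidences of partial sums is handled carefully, including the nonemptiness issue. The paper's route, by contrast, is the one that generalizes to the weighted product inequalities it needs elsewhere (e.g.\ in Theorem \ref{c2a}). Both proofs are valid; yours is arguably the cleaner one for this particular statement.
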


\begin{proof}
By induction on the rank of $G$, Corollary 1 of \cite{SKS} and Theorem \ref{proc} we can show that $C(G)\geq |G|$. Also, from Theorem 1 of \cite{SKS} we get that $C(G)\leq |G|$.
\end{proof}

\begin{rem}
Theorem \ref{cab} is in contrast to Theorem 1 of \cite{O2} which says that for a finite abelian group $G$ we have that $D(G)=|G|$ if and only if $G$ is a finite cyclic group. 
\end{rem}

Let $x=(x_1,\ldots,x_m),y=(y_1,\ldots,y_m)\in \Z_2^{^m}$. We define the dot product of $x$ and $y$ to be $x\cdot y=x_1y_1+\ldots +x_my_m$.

\begin{lem}\label{gricv}
Let $v_1,\ldots,v_a\in \Z_2^{^m}$ where $m\geq 2^a$. Then there exists a non-zero vector $w\in \Z_2^{^m}$ such that for each $i\in [1,a]$ we have $w\cdot v_i=0$ and the coordinates of $w$ which are equal to one occur in consecutive positions.   
\end{lem}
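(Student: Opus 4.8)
The plan is to think of the vectors $v_1, \ldots, v_a$ as defining a linear map $\Z_2^{^m} \to \Z_2^{^a}$ and to look for a nonzero vector in the kernel that is moreover an \emph{interval vector}, i.e. of the form $e_j + e_{j+1} + \cdots + e_k$ for some $1 \le j \le k \le m$ (the indicator of a block of consecutive coordinates). There are exactly $\binom{m}{2} + m = \binom{m+1}{2}$ nonempty interval vectors in $\Z_2^{^m}$, but this count is not quite what drives the argument; instead I would use a telescoping / prefix-sum trick. For $0 \le t \le m$ let $P_t \in \Z_2^{^a}$ be the vector whose $i$-th coordinate is $\sum_{\ell=1}^{t} (v_i)_\ell$, the partial sum of the first $t$ coordinates of $v_i$ (so $P_0 = 0$). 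Then for $0 \le j < k \le m$, the interval vector $w = e_{j+1} + \cdots + e_k$ satisfies $w \cdot v_i = (P_k)_i - (P_j)_i$ for every $i$, so $w \cdot v_i = 0$ for all $i$ exactly when $P_j = P_k$.

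So the statement reduces to: among the $m+1$ vectors $P_0, P_1, \ldots, P_m \in \Z_2^{^a}$, two are equal. Since $|\Z_2^{^a}| = 2^a$ and we have $m + 1 \ge 2^a + 1$ vectors, the pigeonhole principle gives indices $j < k$ with $P_j = P_k$. Then $w = e_{j+1} + \cdots + e_k$ is nonzero (since $j < k$), its support is a set of consecutive positions, and $w \cdot v_i = (P_k)_i - (P_j)_i = 0$ for each $i \in [1,a]$, as required.

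I expect the only real content is recognizing the prefix-sum reformulation; after that the argument is a one-line pigeonhole. The hypothesis $m \ge 2^a$ is used precisely to guarantee $m+1 > 2^a = |\Z_2^{^a}|$. One small point to state carefully: "consecutive positions" should be read cyclically or not — here the natural reading is a genuine interval $\{j+1, \ldots, k\}$ in $[1,m]$, which is exactly what the prefix sums produce, so no wrap-around is needed. I would write the proof in essentially three sentences: define $P_t$, invoke pigeonhole to get $P_j = P_k$ with $j<k$, and exhibit $w$.
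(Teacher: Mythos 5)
Your proof is correct, and it rests on the same underlying combinatorial fact as the paper's: the rows of the $m\times a$ matrix with columns $v_1,\ldots,v_a$ form a sequence of $m\ge 2^a$ elements of $\Z_2^{^a}$, and one needs some block of consecutive rows summing to zero. The only difference is how that fact is obtained. The paper cites $C(\Z_2^{^a})=2^a$ (Theorem \ref{c2a}), of which only the inequality $C(\Z_2^{^a})\le 2^a$ is actually used; you prove exactly that inequality on the spot by the prefix-sum pigeonhole argument (the $m+1>2^a$ partial sums $P_0,\ldots,P_m$ cannot all be distinct, and $P_j=P_k$ yields the interval vector $w=e_{j+1}+\cdots+e_k$). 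So your argument is a self-contained and slightly more elementary rendering of the same proof: you gain independence from Theorem \ref{c2a} and the product construction behind it, at the cost of not making explicit the link to the constant $C(\Z_2^{^a})$ around which the paper is organised. Your side remark is resolved as you guessed: ``consecutive'' here means a genuine interval in $[1,m]$, with no wrap-around, which is precisely what the prefix sums produce.
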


\begin{proof}
Let $P$ be the matrix of size $m\times a$ whose columns are the vectors $v_i$. If $w\in \Z_2^{^m}$, then $wP$ is the vector in $\Z_2^{^a}$ which is the sum of those rows of the matrix $P$ which correspond to the coordinates of $w$ which are 1. From Theorem  \ref{c2a} we have $C(\Z_2^{^a})=2^a$. As $m\geq 2^a$, the sum of some consecutive rows of $P$ is zero. Thus, we can find a vector $w\in \Z_2^{^m}$ as in the statement of the lemma.  
\end{proof}

\begin{lem}\label{gric}
Let $X_1,\ldots,X_a$ be subsets of $[1,m]$ where $m\geq 2^a$. Then there exists $Y\su\{1,\ldots,m\}$ such that the elements of $Y$ are consecutive numbers and for each $i\in [1,a]$ we have $|Y\cap X_i|$ is even.   
\end{lem}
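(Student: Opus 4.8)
The plan is to deduce Lemma~\ref{gric} directly from Lemma~\ref{gricv} by encoding each subset $X_i$ as its indicator vector. First I would set, for each $i\in[1,a]$, the vector $v_i=(v_{i,1},\ldots,v_{i,m})\in\Z_2^{^m}$ defined by $v_{i,j}=1$ if $j\in X_i$ and $v_{i,j}=0$ otherwise. Since $m\geq 2^a$, Lemma~\ref{gricv} applies and yields a non-zero $w\in\Z_2^{^m}$ with $w\cdot v_i=0$ for every $i\in[1,a]$ and with the $1$-coordinates of $w$ occupying consecutive positions.

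Next I would let $Y=\{\,j\in[1,m]: w_j=1\,\}$. Because $w$ is non-zero, $Y$ is non-empty, and because the $1$-entries of $w$ are in consecutive positions, the elements of $Y$ are consecutive integers. The remaining point is that $w\cdot v_i=\sum_{j=1}^m w_j v_{i,j}$, computed in $\Z_2$, counts (mod $2$) exactly the indices $j$ that lie in both $Y$ and $X_i$; hence $w\cdot v_i=0$ in $\Z_2$ is precisely the assertion that $|Y\cap X_i|$ is even. This gives the desired $Y$ for each $i\in[1,a]$ simultaneously.

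There is essentially no obstacle here: all the work has already been done in Lemma~\ref{gricv} (which in turn rests on Theorem~\ref{c2a}), and this final step is just the translation between the linear-algebra formulation over $\Z_2$ and the combinatorial statement about parities of intersection sizes. The only thing to be slightly careful about is noting that $w\neq 0$ forces $Y\neq\emptyset$, so that $Y$ is a genuine block of consecutive numbers rather than the empty set, which matches the (implicit) requirement that $Y$ be a non-trivial window.
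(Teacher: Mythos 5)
Your proposal is correct and is essentially identical to the paper's own proof: both encode the subsets $X_i$ as indicator vectors in $\Z_2^{^m}$, apply Lemma~\ref{gricv}, and use the observation that the dot product of two indicator vectors vanishes in $\Z_2$ exactly when the corresponding intersection has even size. Your additional remark that $w\neq 0$ guarantees $Y\neq\emptyset$ is a worthwhile point of care that the paper leaves implicit.
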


\begin{proof}
We identify a subset $A\su [1,m]$ with the vector $x_A\in\Z_2^{^m}$ whose $j^{th}$ coordinate is one if and only if $j\in A$. For any two subsets $A$ and $B$ of $[1,m]$ we observe that $x_A\cdot x_B=0$ if and only if $|A\cap B|$ is even. So we see that Lemma \ref{gric} follows from Lemma \ref{gricv}.  
\end{proof}

For a divisor $m$ of $n$, we define the natural map $f_{n,m}:\Z_n\to\Z_m$ to be the map given by $f_{n,m}\big(x+n\Z\big)=x+m\Z$. Let $p$ be a prime divisor of $n$ with $v_p(n)=r$. For a sequence $S$ in $\Z_n$, we denote its image under $f_{n,p^r}$ by $S^{(p)}$. 

\begin{obs}\label{obs}
A sequence $S$ is a $U(n)$-weighted zero-sum sequence in $\Z_n$ if and only if for every prime divisor $p$ of $n$ the sequence $S^{(p)}$ is a $U(p^r)$-weighted zero-sum sequence in $\Z_{p^r}$ where $v_p(n)=r$.
\end{obs}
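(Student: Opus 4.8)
The plan is to use the Chinese Remainder Theorem together with the definition of $U(n)$-weighted zero-sum.

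\medskip\noindent
\textbf{Proof proposal for Observation \ref{obs}.}
Write $n=p_1^{r_1}\cdots p_s^{r_s}$ with the $p_i$ distinct primes, so by the Chinese Remainder Theorem the natural maps $f_{n,p_i^{r_i}}$ assemble into a ring isomorphism $\Z_n\xrightarrow{\ \sim\ }\prod_{i=1}^s\Z_{p_i^{r_i}}$, and correspondingly $U(n)\xrightarrow{\ \sim\ }\prod_{i=1}^s U(p_i^{r_i})$. First I would record this last fact carefully: an element $a\in\Z_n$ is a unit if and only if each component $f_{n,p_i^{r_i}}(a)$ is a unit in $\Z_{p_i^{r_i}}$, and conversely, given any tuple $(a_1,\dots,a_s)$ with $a_i\in U(p_i^{r_i})$, there is a (unique) unit $a\in U(n)$ with $f_{n,p_i^{r_i}}(a)=a_i$ for all $i$. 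This is the only structural input needed.

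\medskip\noindent
Now let $S=(x_1,\dots,x_k)$ be a sequence in $\Z_n$, and let $I=\{i:x_i\in S\}$ (all of $[1,k]$; I would phrase it for an arbitrary subsequence if the paper's convention demands, but the statement is about the whole sequence being a zero-sum sequence). For the forward direction, suppose $S$ is a $U(n)$-weighted zero-sum sequence, so there exist $a_i\in U(n)$ with $\sum_i a_i x_i=0$ in $\Z_n$. Fix a prime divisor $p=p_j$ with $v_p(n)=r=r_j$. Applying the ring homomorphism $f_{n,p^r}$ to the relation $\sum_i a_i x_i=0$ gives $\sum_i f_{n,p^r}(a_i)\,f_{n,p^r}(x_i)=0$ in $\Z_{p^r}$; since each $f_{n,p^r}(a_i)\in U(p^r)$, this exhibits $S^{(p)}$ as a $U(p^r)$-weighted zero-sum sequence. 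For the converse, suppose that for each prime divisor $p_j$ (with $r_j=v_{p_j}(n)$) the sequence $S^{(p_j)}$ is a $U(p_j^{r_j})$-weighted zero-sum sequence, so for each $j$ there are units $a_i^{(j)}\in U(p_j^{r_j})$ with $\sum_i a_i^{(j)}\,f_{n,p_j^{r_j}}(x_i)=0$ in $\Z_{p_j^{r_j}}$. By the CRT fact above, for each fixed $i$ choose $a_i\in U(n)$ with $f_{n,p_j^{r_j}}(a_i)=a_i^{(j)}$ for every $j$. Then $\sum_i a_i x_i$ is an element of $\Z_n$ whose image under each $f_{n,p_j^{r_j}}$ is $0$; since the product map $\Z_n\to\prod_j\Z_{p_j^{r_j}}$ is injective, $\sum_i a_i x_i=0$ in $\Z_n$, so $S$ is a $U(n)$-weighted zero-sum sequence.

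\medskip\noindent
I do not expect any serious obstacle here; the only point requiring a little care is the surjectivity half of the $U(n)\cong\prod U(p_i^{r_i})$ statement used in the converse — i.e., that an arbitrary tuple of prescribed unit residues modulo the prime powers lifts to a single unit of $\Z_n$ — which is immediate from CRT but should be stated explicitly rather than taken for granted. A minor notational matter is to make sure the definition of ``$A$-weighted zero-sum subsequence'' (with its index set $I$) is applied consistently on both sides, so that the same index set $I$ is used modulo $n$ and modulo each $p^r$; this is automatic because $f_{n,p^r}$ does not collapse any terms of the sequence to ``absent'' — it simply reduces each entry.
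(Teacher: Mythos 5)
Your proof is correct. The paper itself gives no proof of this statement --- it simply quotes it as Observation 2.2 of the cited paper of Griffiths --- and the Chinese Remainder Theorem argument you give (each $f_{n,p^r}$ is a ring homomorphism carrying units to units for the forward direction; a tuple of prescribed unit residues modulo the prime powers lifts to a single unit of $\Z_n$, and the product map $\Z_n\to\prod_j \Z_{p_j^{r_j}}$ is injective, for the converse) is exactly the standard argument underlying it. Nothing is missing.
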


This is Observation 2.2 in \cite{G}. The next result follows from Lemma 2.4 in \cite{G} along with the remark which follows it. 

\begin{lem}\label{gri}
Let $p$ be a prime, $n=p^r$ for some $r$ and $S=(x_1,\ldots,x_m)$ a sequence in $\Z_n$. Suppose for each $i\in [1,r]$, the size of the set $X_i$ is even where $X_i=\{j:x_j\not\equiv 0~(mod~p^i)\}$. Then $S$ is a $U(n)$-weighted zero-sum sequence. 
\end{lem}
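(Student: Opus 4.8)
The plan is to prove Lemma \ref{gri} by reducing it to the combinatorial statement of Lemma \ref{gric}, using Observation \ref{obs} as the bridge between the ``many even-sized sets'' hypothesis and the $U(p^r)$-weighted zero-sum conclusion. Since $n=p^r$, the sequence $S$ equals $S^{(p)}$, so Observation \ref{obs} tells us that $S$ is a $U(n)$-weighted zero-sum sequence precisely when we can realize $0$ as a $U(p^r)$-weighted sum of all its terms. The key point is that multiplying a term $x_j$ by a unit does not change which power $p^i$ divides $x_j$; so the ``profile'' $(X_1,\ldots,X_r)$ recording in which sets $X_i = \{\,j : p^i \nmid x_j\,\}$ each index lies is an invariant of the unit-scaled sequence, and what we really need is a statement purely about such profiles.

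First I would recall (or quote, since it is attributed to Lemma 2.4 of \cite{G}) the following: if $T=(t_1,\ldots,t_m)$ is a sequence in $\Z_{p^r}$ such that for every $i\in[1,r]$ the number of indices $j$ with $p^i \nmid t_j$ is \emph{even}, then $T$ is a $U(p^r)$-weighted zero-sum sequence --- i.e., there exist units $u_j$ with $\sum u_j t_j = 0$. Actually, re-reading the setup, this \emph{is} essentially what Lemma \ref{gri} asserts, so the ``proof'' is really just the remark that this is Lemma 2.4 of \cite{G} together with the observation that the hypothesis, as stated in terms of the $X_i$, matches. So the cleanest approach is: apply Lemma 2.4 of \cite{G} directly, after noting that $v_p$ stratifies $\Z_{p^r}$ into the chain of ideals $(p^i)$ and that $X_i$ records exactly the complement of the $i$-th stratum. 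I would write out why the even-cardinality condition on each $X_i$ suffices: one can pair up the elements of $X_1\setminus X_2$, then those of $X_2\setminus X_3$, and so on, and on each pair $\{x_a, x_b\}$ with the same $p$-adic valuation $i$, choose a unit $u$ so that $u\,x_a = -x_b$ in $\Z_{p^r}$ (possible since $x_a$ and $-x_b$ differ by a unit when they have equal valuation), giving the cancelling pair $u\,x_a + 1\cdot x_b = 0$; the terms in $X_r$ (valuation $\geq r$, i.e.\ zero) contribute nothing. Summing over all pairs yields a $U(p^r)$-weighted zero-sum using all terms.

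The main obstacle --- and it is a small one --- is the valuation-equality step: when $p^i \mid x_a$ but $p^{i+1}\nmid x_a$, and similarly for $x_b$, I need $x_a \equiv -x_b\cdot (\text{unit})$, equivalently $x_b/x_a$ (in the sense of $x_b = (\text{unit})\cdot x_a$ after dividing out $p^i$) makes sense modulo $p^{r-i}$, and then lift; I should be slightly careful that a unit of $\Z_{p^{r-i}}$ lifts to a unit of $\Z_{p^r}$, which it does. The pairing itself is well-defined because the even-cardinality hypothesis on $X_i$ for all $i$ forces $|X_i \setminus X_{i+1}| = |X_i| - |X_{i+1}|$ to be even for each $i\in[1,r-1]$, and $|X_r|$ is even by hypothesis too, so every valuation class has an even number of representatives and pairs up cleanly. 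I would close by noting that $S = S^{(p)}$ since $n = p^r$, so ``$S$ is a $U(n)$-weighted zero-sum sequence'' is exactly the conclusion produced.

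\begin{proof}
Since $n=p^r$, we have $S=S^{(p)}$, so by Observation \ref{obs} it suffices to exhibit units $u_1,\ldots,u_m\in U(p^r)$ with $\sum_{j} u_j x_j = 0$ in $\Z_{p^r}$. For $i\in[1,r]$ set $X_i=\{\,j:x_j\not\equiv 0~(mod~p^i)\,\}$, so $[1,m]=X_0\supseteq X_1\supseteq\cdots\supseteq X_r$ where $X_0=[1,m]$, and for $j\in X_i\setminus X_{i+1}$ (with $X_{r+1}:=\varnothing$) the element $x_j$ has $p$-adic valuation exactly $i$. The indices in $[1,m]\setminus X_r$ satisfy $x_j\equiv 0$ and may be assigned $u_j=1$; they contribute $0$ to the sum. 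By hypothesis $|X_i|$ is even for each $i\in[1,r]$, hence $|X_i\setminus X_{i+1}|=|X_i|-|X_{i+1}|$ is even for $i\in[1,r-1]$ and $|X_r\setminus X_{r+1}|=|X_r|$ is even. Thus we may partition each set $X_i\setminus X_{i+1}$ ($i\in[1,r-1]$) and also $X_r$ into unordered pairs. For such a pair $\{a,b\}$ with $v_p(x_a)=v_p(x_b)=i<r$, write $x_a=p^i s$, $x_b=p^i t$ with $s,t\in U(p^r)$ (units since the valuation is exactly $i$, and after reducing mod $p^{r-i}$ they are units there, which lift to units of $\Z_{p^r}$); then $u:=-t s^{-1}\in U(p^r)$ satisfies $u\,x_a+1\cdot x_b = p^i(-t s^{-1}s + t)=0$, so assign these units to $a$ and $b$. (If $i=r$, i.e.\ the pair lies in $X_r$, then $x_a\equiv x_b\equiv 0$ and $u_a=u_b=1$ works.) Summing the contributions of all pairs and of the indices outside $X_r$ gives $\sum_j u_j x_j = 0$ with all $u_j\in U(p^r)$. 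Hence $S$ is a $U(n)$-weighted zero-sum sequence.
\end{proof}
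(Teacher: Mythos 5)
Your argument is essentially correct, and it is worth noting that the paper does not actually prove this lemma at all: it simply cites Lemma 2.4 of \cite{G} and the remark following it. So your self-contained pairing argument is a genuine addition rather than a reproduction of the paper's route. The substance is sound: the hypothesis that every $|X_i|$ is even forces every $p$-adic valuation class of indices to have even size, each class can be split into pairs, and each pair $\{a,b\}$ with $v_p(x_a)=v_p(x_b)=i$ is cancelled by the weights $(-ts^{-1},\,1)$ where $x_a=p^i s$, $x_b=p^i t$ with $s,t\in U(p^r)$; the zero terms are absorbed with weight $1$. (The appeal to Observation \ref{obs} is harmless but vacuous when $n=p^r$.) One piece of bookkeeping must be fixed before the write-up is correct as stated: since $X_i=\{j:p^i\nmid x_j\}=\{j:v_p(x_j)<i\}$, the chain of sets is \emph{increasing}, $X_1\subseteq X_2\subseteq\cdots\subseteq X_r$, and $X_r$ is the set of indices of the \emph{nonzero} terms, not a small set at the bottom of a decreasing chain. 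Consequently the class of indices of valuation exactly $i$ is $X_{i+1}\setminus X_i$ for $1\le i\le r-1$ (the valuation-$0$ class being $X_1$ itself), not $X_i\setminus X_{i+1}$, and your parenthetical ``if the pair lies in $X_r$ then $x_a\equiv x_b\equiv 0$'' has the roles of $X_r$ and its complement reversed. None of this changes any parity --- successive differences of nested even-sized sets are still even, and the complement of $X_r$ consists of the zero terms whose parity is irrelevant --- so the repair is purely notational and the proof goes through.
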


\begin{thm}\label{cun}
$C_{U(n)}(n)=2^{\Omega(n)}$. 
\end{thm}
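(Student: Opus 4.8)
The plan is to prove the two inequalities $C_{U(n)}(n) \ge 2^{\Omega(n)}$ and $C_{U(n)}(n) \le 2^{\Omega(n)}$ separately. Write $n = p_1^{r_1}\cdots p_s^{r_s}$, so $\Omega(n) = r_1 + \cdots + r_s$. For the lower bound, I would exhibit a sequence in $\Z_n$ of length $2^{\Omega(n)}-1$ with no $U(n)$-weighted zero-sum subsequence of consecutive terms. The natural candidate is a constant sequence $(1,1,\ldots,1)$ of that length, or more flexibly one built so that a block of consecutive terms of length $\ell$ sums (with unit weights) to something nonzero unless $\ell$ is divisible by a high power of each $p_i$. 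In fact, by Observation \ref{obs}, a consecutive block is a $U(n)$-weighted zero-sum iff its image in each $\Z_{p_i^{r_i}}$ is a $U(p_i^{r_i})$-weighted zero-sum; one checks that for the all-ones sequence, a block of length $\ell$ fails to be a $U(p_i^{r_i})$-weighted zero-sum whenever $0 < \ell < p_i^{r_i}$ is not a multiple of $p_i$ in a suitable sense — more carefully, using Lemma \ref{gri} in reverse, one wants $\ell$ to avoid being killed in every coordinate, and the obstruction-counting shows the shortest "bad" length is $2^{\Omega(n)}$. So a block-free sequence of length $2^{\Omega(n)} - 1$ exists. (Alternatively, and more cleanly, I would derive the lower bound from Theorem \ref{proc} by induction on $\Omega(n)$, using $C_{U(p^r)}(p^r) \ge 2^r$ as the base case via the result cited from \cite{SKS} for prime powers, together with the multiplicativity $\Z_n \cong \prod \Z_{p_i^{r_i}}$ and $U(n) \cong \prod U(p_i^{r_i})$.)

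For the upper bound — the substantive direction — I would show that any sequence $S = (x_1, \ldots, x_m)$ in $\Z_n$ with $m \ge 2^{\Omega(n)}$ has a $U(n)$-weighted zero-sum subsequence of consecutive terms. The idea is to apply Lemma \ref{gric} with $a = \Omega(n)$. For each prime $p_i$ dividing $n$ and each exponent $t \in [1, r_i]$, define the subset $X_{i,t} = \{\, j \in [1,m] : x_j \not\equiv 0 \pmod{p_i^t}\,\}$; this gives a total of $r_1 + \cdots + r_s = \Omega(n)$ subsets of $[1,m]$. Since $m \ge 2^{\Omega(n)}$, Lemma \ref{gric} produces a set $Y \su [1,m]$ of consecutive integers such that $|Y \cap X_{i,t}|$ is even for every $i$ and every $t \in [1, r_i]$. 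The consecutive subsequence $T$ indexed by $Y$ then satisfies, for each prime $p_i$, the hypothesis of Lemma \ref{gri} applied to the image sequence $T^{(p_i)}$ in $\Z_{p_i^{r_i}}$ (the relevant sets there are exactly $Y \cap X_{i,t}$ for $t \in [1, r_i]$, all of even size). Hence $T^{(p_i)}$ is a $U(p_i^{r_i})$-weighted zero-sum sequence for every $i$, and by Observation \ref{obs}, $T$ is a $U(n)$-weighted zero-sum subsequence of $S$ with consecutive terms. This gives $C_{U(n)}(n) \le 2^{\Omega(n)}$.

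The main obstacle is making sure the bookkeeping lines up: one must verify that the "even intersection" sets delivered by Lemma \ref{gric} are precisely the sets whose evenness Lemma \ref{gri} requires, i.e. that passing from $\Z_n$ to $\Z_{p_i^{r_i}}$ via the natural map $f_{n, p_i^{r_i}}$ does not disturb the condition $x_j \not\equiv 0 \pmod{p_i^t}$ — which is immediate since $p_i^t \mid p_i^{r_i} \mid n$ and divisibility by $p_i^t$ is detected in $\Z_{p_i^{r_i}}$. A secondary point is the lower bound construction: if I route it through Theorem \ref{proc} I need the prime-power case $C_{U(p^r)}(p^r) \ge 2^r$, which for odd $p$ is Corollary 4 of \cite{SKS}, and for $p = 2$ is exactly Theorem \ref{c2a} (since $U(2^r)$-weighting on $\Z_{2^r}$ specializes appropriately, or one argues directly). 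Once both directions are in hand, combining them yields $C_{U(n)}(n) = 2^{\Omega(n)}$, completing the proof and, as noted in the introduction, giving a new proof of Corollary 4 of \cite{SKS} valid for all $n$.
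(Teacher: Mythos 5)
Your upper-bound argument is exactly the paper's proof: form the $\Omega(n)$ sets $X_i^{(p)}=\{j:x_j\not\equiv 0\ (\mathrm{mod}\ p^i)\}$, apply Lemma \ref{gric} to get a consecutive index set $Y$ meeting each in an even number of elements, then combine Lemma \ref{gri} with Observation \ref{obs}. That part is correct and complete, and it is the substantive direction.

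The lower bound is where your proposal has a genuine problem. The paper disposes of it in one line by citing Corollary 2 of \cite{SKS}, which already gives $C_{U(n)}(n)\geq 2^{\Omega(n)}$ for all $n$. Your primary candidate, the all-ones sequence, fails outright: for any $n\geq 2$ the weights $1$ and $n-1$ are both units and $1\cdot 1+(n-1)\cdot 1=0$, so \emph{any} two consecutive terms of $(1,1,\ldots,1)$ already form a $U(n)$-weighted zero-sum subsequence; no amount of ``obstruction-counting'' rescues this, and the surrounding heuristic about lengths divisible by powers of $p_i$ is not a proof. Your fallback route --- induction on the number of prime-power factors using the product theorems of Section 2 together with the prime-power lower bounds --- is sound in outline, but your attribution for the base case $p=2$ is wrong: Theorem \ref{c2a} asserts $C(\Z_2^{^a})=2^a$ for the elementary abelian group $\Z_2^{^a}$ with weight set $\{1\}$, which is a different statement from $C_{U(2^r)}(2^r)\geq 2^r$ for the cyclic group $\Z_{2^r}$ with unit weights; the extremal sequences witnessing the latter look like $(4,2,4,1,4,2,4)$ in $\Z_8$ (see Section 6 of the paper) and require their own recursive construction, which is precisely what Corollary 2 of \cite{SKS} supplies. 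So to close the gap you should either cite that corollary directly, as the paper does, or actually build the nested extremal sequence for each prime power before invoking the product theorem.
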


\begin{proof}
By Corollary 2 of \cite{SKS} we have $C_{U(n)}(n)\geq 2^{\Omega(n)}$. Let $a=\Omega(n)$ and $S=(x_1,\ldots,x_m)$ be a sequence in $\Z_n$ of length $m=2^a$. If we show that $S$ has a $U(n)$-weighted zero-sum subsequence of consecutive terms, it will follow that $C_{U(n)}(n)\leq 2^{\Omega(n)}$. For any prime divisor $p$ of $n$ and for each $i\in [1,v_p(n)]$ we let  
$$X_i^{(p)}=\{\,j:x_j\not\equiv 0~(mod~p^i)\,\}.$$ 
We observe that we have $a$ sets in the collection 
$$\big\{\,X_i^{(p)}:p~\textrm{is a prime divisor of}~n~\textrm{and}~i\in [1,v_p(n)]\,\big\}.$$ 
By Lemma \ref{gric} we have a subset $Y\su [1,m]$ such that all the elements of $Y$ are consecutive numbers and for each prime divisor $p$ of $n$ and for each $i\in [1,v_p(n)]$ we have $|\,Y\cap X_i^{(p)}\,|$ is even. Let $T$ be the subsequence of $S$ such that $x_j$ is a term of $T$ if and only if $j\in Y$.

Let $q$ be a prime divisor of $n$ and $v_q(n)=r$. By Lemma \ref{gri} the sequence $T^{(q)}$ is a $U(q^r)$-weighted zero-sum sequence in $\Z_{q^r}$ as for each $i\in [1,r]$ the set  
$$Y\cap X_i^{(q)}=\{\,j\in Y:x_j\not\equiv 0~(mod~q^i)\,\}$$ 
has even size. Thus, by Observation \ref{obs} it follows that the sequence $T$ is a $U(n)$-weighted zero-sum sequence. Hence, we see that $S$ has a $U(n)$-weighted zero-sum subsequence of consecutive terms.  
\end{proof}

\section{$D_{U(n)}(n)$}

The next result is a special case of the main result in \cite{O}. We are able to give a very simple argument and have hence included the proof. We can also prove Theorem \ref{d2a} by using Theorem \ref{sumd} and the easy observation that $D(\Z_2)=2$. 

\begin{thm}\label{d2a}
$D(\Z_2^{^a})=a+1$.  
\end{thm}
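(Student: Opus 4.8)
The plan is to prove the equality $D(\Z_2^{^a})=a+1$ by establishing the two inequalities separately, exactly mirroring the structure used for $C(\Z_2^{^a})$ earlier in the paper. For the lower bound $D(\Z_2^{^a})\geq a+1$, I would exhibit a sequence of length $a$ in $\Z_2^{^a}$ with no $\{1\}$-weighted zero-sum subsequence; the natural candidate is the standard basis $(e_1,\ldots,e_a)$, since any nonempty subset of these vectors sums to a nonzero vector (the $0/1$ indicator of that subset). Alternatively, and perhaps more in keeping with the remark preceding the statement, one can induct: $D(\Z_2)=2$ is immediate (the single-term sequence $(1)$ has no zero-sum subsequence), and then Theorem \ref{sumd} with $M=\Z_2^{^{a-1}}$, $N=\Z_2$, $A=\{1\}$ gives $D(\Z_2^{^a})\geq D(\Z_2^{^{a-1}})+D(\Z_2)-1 = a+1$ by induction.

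For the upper bound $D(\Z_2^{^a})\leq a+1$, I would take an arbitrary sequence $S=(v_1,\ldots,v_{a+1})$ of $a+1$ vectors in $\Z_2^{^a}$ and show it has a nonempty subsequence summing to zero. Over the field $\Z_2$, any $a+1$ vectors in the $a$-dimensional vector space $\Z_2^{^a}$ are linearly dependent, so there is a nonempty set $I\su[1,a+1]$ and scalars $c_i\in\Z_2$, not all zero, with $\sum_{i\in I} c_i v_i = 0$; discarding the indices where $c_i=0$ leaves a nonempty subsequence whose (unweighted) sum is zero. That is precisely a $\{1\}$-weighted zero-sum subsequence, so $D(\Z_2^{^a})\leq a+1$. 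Combining the two bounds gives the result.

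The only real subtlety — and it is a mild one — is making sure the notion of ``$\{1\}$-weighted zero-sum subsequence'' from Definition 1.1 genuinely coincides with ``a nonempty sub-collection of terms whose sum is $0$''; this is immediate since with $A=\{1\}$ every coefficient $a_i$ is forced to be $1$. So there is no genuine obstacle here: the proof is short, and the ``main step'' is just the dimension count (an $(a+1)$-element subset of an $a$-dimensional $\Z_2$-vector space is dependent). I would present the inductive argument via Theorem \ref{sumd} as it is the cleanest and matches the paper's stated intent, and optionally note the linear-algebra argument for the upper bound.
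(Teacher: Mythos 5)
Your proof is correct and is essentially the paper's own argument: the lower bound via a linearly independent family (the standard basis) and the upper bound via the fact that any $a+1$ vectors in the $a$-dimensional $\Z_2$-vector space $\Z_2^{^a}$ are linearly dependent. The alternative inductive route you sketch via Theorem \ref{sumd} and $D(\Z_2)=2$ is also exactly what the paper mentions in the remark preceding the theorem, so there is nothing to add.
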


\begin{proof}
Let $B=\{x_1,\ldots,x_a\}$ be a basis of the $\Z_2$-vector space $\Z_2^{^a}$. As $B$ is linearly independent, the sequence $S=(x_1,\ldots,x_a)$ does not have any zero-sum subsequence and so we see that  $D(\Z_2^{^a})\geq a+1$. Any sequence of length $a+1$ in $\Z_2^{^a}$ has a zero-sum subsequence as any set of $a+1$ vectors is linearly dependent. Hence, we see that $D(\Z_2^{^a})\leq a+1$.
\end{proof}

\begin{lem}\label{griv}
Let $v_1,\ldots,v_a\in \Z_2^{^m}$ where $m\geq a+1$. Then there exists a non-zero vector $w\in \Z_2^{^m}$ such that for each $i\in [1,a]$ we have $w\cdot v_i=0$.   
\end{lem}

\begin{proof}
Let $P$ be the matrix of size $m\times a$ whose columns are the vectors $v_i$. If $w\in \Z_2^{^m}$, then $wP$ is the vector in $\Z_2^{^a}$ which is the sum of those rows of the matrix $P$ which correspond to the coordinates of $w$ which are 1. From Theorem \ref{d2a} we have $D(\Z_2^{^a})=a+1$. As $m\geq a+1$, the sum of some rows of $P$ is zero. Thus, we can find a vector $w\in \Z_2^{^m}$ as in the statement of the lemma.  
\end{proof}

The proof of the next result is similar to the proof of Lemma \ref{gric}.

\begin{lem}\label{grif}
Let $X_1,\ldots,X_a$ be subsets of $\{1,\ldots,m\}$, where $m\geq a+1$. Then there exists a non-empty subset $Y\su\{1,\ldots,m\}$ such that for each $i\in [1,a]$, we have $|\,Y\cap X_i\,|$ is even.   
\end{lem}

\begin{thm}\label{dun}
$D_{U(n)}(n)=\Omega(n)+1$. 
\end{thm}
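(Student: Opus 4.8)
The plan is to mirror the structure of the proof of Theorem \ref{cun}, replacing the ``consecutive terms'' machinery (Lemma \ref{gric}, Theorem \ref{c2a}) with its unrestricted analogue (Lemma \ref{grif}, Theorem \ref{d2a}). First I would establish the lower bound $D_{U(n)}(n)\geq \Omega(n)+1$: writing $n=p_1^{r_1}\cdots p_s^{r_s}$ with $a=\Omega(n)$, I would exhibit a sequence of length $a$ with no $U(n)$-weighted zero-sum subsequence. The natural candidate is the sequence which, via the Chinese Remainder isomorphism $\Z_n\cong \prod_i \Z_{p_i^{r_i}}$, for each $i$ contributes the $r_i$ elements $1,p_i,p_i^2,\ldots,p_i^{r_i-1}$ in the $\Z_{p_i^{r_i}}$-coordinate and $0$ elsewhere. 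Using Observation \ref{obs}, a subsequence is a $U(n)$-weighted zero-sum subsequence iff each of its prime-components is a $U(p_i^{r_i})$-weighted zero-sum subsequence; and for a single prime power $p^r$, one checks that the sequence $(1,p,\ldots,p^{r-1})$ in $\Z_{p^r}$ has no such subsequence because the highest $p$-adic valuation term present cannot be cancelled by a unit combination of strictly higher-valuation terms. Alternatively, since $D_{U(p^r)}(p^r)=r+1$ is classical (or follows from the same valuation argument), one can assemble the lower bound by an $s$-fold application of Theorem \ref{sumd} together with the CRT decomposition of $\Z_n$ as a $\Z_n$-module.

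For the upper bound $D_{U(n)}(n)\leq \Omega(n)+1$, I would take any sequence $S=(x_1,\ldots,x_m)$ in $\Z_n$ with $m=a+1$ and produce a $U(n)$-weighted zero-sum subsequence. Exactly as in Theorem \ref{cun}, for each prime divisor $p$ of $n$ and each $i\in[1,v_p(n)]$ set $X_i^{(p)}=\{\,j : x_j\not\equiv 0 \pmod{p^i}\,\}$; there are exactly $a$ such sets. Since $m=a+1\geq a+1$, Lemma \ref{grif} yields a non-empty $Y\su\{1,\ldots,m\}$ with $|Y\cap X_i^{(p)}|$ even for every $p$ and every $i\in[1,v_p(n)]$. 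Letting $T$ be the subsequence of $S$ indexed by $Y$, Lemma \ref{gri} shows that for each prime divisor $q$ of $n$ with $v_q(n)=r$, the image $T^{(q)}$ is a $U(q^r)$-weighted zero-sum sequence in $\Z_{q^r}$, and then Observation \ref{obs} gives that $T$ itself is a $U(n)$-weighted zero-sum subsequence of $S$. Combining the two bounds gives $D_{U(n)}(n)=\Omega(n)+1$.

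The main obstacle I anticipate is the lower bound, specifically verifying carefully that the proposed extremal sequence really has no $U(n)$-weighted zero-sum subsequence. The CRT reduction via Observation \ref{obs} cleanly isolates this to the prime-power case, so the crux is the single-prime claim: in $\Z_{p^r}$, the sequence $(1,p,\ldots,p^{r-1})$ admits no $U(p^r)$-weighted zero-sum subsequence. Here one picks, among the indices $j$ appearing in a putative zero-sum subsequence, the smallest exponent $k$, so that $x_j=p^k$ with $a_j p^k + \sum_{\text{others}} a_\ell p^\ell = 0$; dividing through by $p^k$ and reducing mod $p$ leaves $a_j \equiv 0 \pmod p$, contradicting $a_j\in U(p^r)$. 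One must take a little care that this argument survives the reduction $f_{n,p^r}$ — i.e. that the relevant coordinates of the extremal sequence are genuinely $p^k$ and $0$ — but this is exactly what the CRT construction guarantees. Everything else is a direct transcription of the already-proved Theorem \ref{cun} with Lemma \ref{gric} replaced by Lemma \ref{grif}.
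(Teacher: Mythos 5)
Your upper bound is essentially word-for-word the paper's argument: the $a=\Omega(n)$ sets $X_i^{(p)}$, Lemma \ref{grif} in place of Lemma \ref{gric}, then Lemma \ref{gri} and Observation \ref{obs} applied to the subsequence indexed by $Y$. Where you genuinely differ is the lower bound. The paper obtains $D_{U(n)}(n)\geq\Omega(n)+1$ by quoting $D_{U(p)}(p)=2$ (from Theorem 2 of \cite{SKS}) and then invoking Lemma 1.8 of \cite{SKS3}, an external multiplicativity-type lower bound; you instead write down an explicit candidate extremal sequence of length $\Omega(n)$ --- under CRT, the elements $1,p,\ldots,p^{v_p(n)-1}$ in each $p$-primary coordinate and zero elsewhere --- and verify directly, via Observation \ref{obs} together with a minimal-valuation argument (divide the putative relation by $p^k$ for the smallest exponent $k$ present, reduce mod $p$, and contradict $a_k\in U(p^r)$), that it admits no $U(n)$-weighted zero-sum subsequence. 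That verification is correct (your first, informal description says ``highest'' $p$-adic valuation where you mean lowest, but the detailed version correctly picks the smallest exponent), and your alternative assembly via Theorem \ref{sumd} also works since $f_{n,p^r}$ carries $U(n)$ onto $U(p^r)$. The trade-off: the paper's lower bound is shorter but leans on a lemma imported from \cite{SKS3}, while yours is self-contained within the tools already proved in this paper and has the side benefit of exhibiting an explicit $D$-extremal sequence for $U(n)$.
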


\begin{proof}
For a prime $p$, the argument in the proof of Theorem 2 of \cite{SKS} shows that $D_{U(p)}(p)=2$. So from Lemma 1.8 of \cite{SKS3} we see that $D_{U(n)}(n)\geq\Omega(n)+1$.

Let $S=(x_1,\ldots,x_m)$ be a sequence in $\Z_n$ of length $m=\Omega(n)+1$. If we show that $S$ has a $U(n)$-weighted zero-sum subsequence, then it will follow that $D_{U(n)}(n)\leq \Omega(n)+1$. For any prime divisor $p$ of $n$ and for each $i\in [1,v_p(n)]$ let $X_i^{(p)}=\{\,j:x_j\not\equiv 0~(mod~p^i)\,\}$. The collection 
$$\{\,X_i^{(p)}:p~\textrm{is a prime divisor of}~n~\textrm{and}~i\in [1,v_p(n)]\,\}$$ 
has $\Omega(n)$ sets. As $m=\Omega(n)+1$, by Lemma \ref{grif} we have a non-empty subset $Y\su\{1,2,\ldots,m\}$ such that for each prime divisor $p$ of $n$ and for each $i\in [1,v_p(n)]$ we have $|\,Y\cap X_i^{(p)}\,|$ is even. Let $T$ be the subsequence of $S$ such that $x_j$ is a term of $T$ if and only if $j\in Y$.

Let $p$ be a prime divisor of $n$ and let $v_p(n)=r$. By Lemma \ref{gri} the sequence $T^{(p)}$ is a $U(p^r)$-weighted zero-sum sequence in $\Z_{p^r}$, as for each $i\in [1,r]$ the set  
$$Y\cap X_i^{(p)}=\{\,j\in Y:x_j\not\equiv 0~(mod~p^i)\,\}$$ 
has even size. Thus, by Observation \ref{obs} the sequence $T$ is a $U(n)$-weighted zero-sum sequence. Hence, it follows that $S$ has a $U(n)$-weighted zero-sum subsequence. 
\end{proof}

\section{Zero-sum subsequences of length $m$ in $\Z_2^{^a}$}

If $T$ is a subsequence of a sequence $S$, then $S-T$ denotes the sequence which is obtained by removing the terms of $T$ from $S$. 

\begin{obs}\label{zsn}
Let $(G,+)$ be a finite abelian group of order $n$ and $a\in G$. Let $S$ be a sequence of length $n$ in $G$ and $S-a$ denote the sequence which is obtained by subtracting $a$ from each term of $S$. If $S-a$ is a zero-sum sequence then $S$ is a zero-sum sequence. 
\end{obs}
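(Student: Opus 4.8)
The plan is to reduce the claim to the elementary group-theoretic fact that $na = 0$ for every element $a$ of a finite abelian group of order $n$. Write $S = (x_1, \ldots, x_n)$, so that $S - a = (x_1 - a, \ldots, x_n - a)$. The hypothesis that $S - a$ is a zero-sum sequence means $\sum_{i=1}^{n}(x_i - a) = 0$, and rearranging this (using that $G$ is abelian and that $S$ has exactly $n$ terms) gives $\sum_{i=1}^{n} x_i = na$.

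Next I would invoke Lagrange's theorem: the order of the cyclic subgroup $\langle a\rangle$ divides $|G| = n$, hence the order of $a$ divides $n$, and therefore $na = 0$. Substituting into the identity above yields $\sum_{i=1}^{n} x_i = 0$, that is, $S$ is a zero-sum sequence.

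There is essentially no obstacle here; the only point that must not be overlooked is that the length of $S$ is exactly $n = |G|$, since this is precisely what forces $na$ to vanish. (The same computation shows the converse implication as well, so in fact the two conditions are equivalent, but only the stated direction is needed later.)
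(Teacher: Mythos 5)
Your proof is correct and is the natural argument: the paper states this as an observation without proof, and the computation $\sum_i (x_i - a) = \sum_i x_i - na$ together with $na = 0$ (Lagrange) is exactly what is intended. You also correctly identify the one essential hypothesis, namely that the length of $S$ equals $|G|$.
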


We now rederive a weaker version of Corollary 3.2 of \cite{G}. The proof given in \cite{G} uses Theorem 1.1 of \cite{E}. We have given a different proof which uses the value of $D(\Z_2^{^a})$. 

\begin{thm}\label{e2a}
Let $S$ be a sequence in $\Z_2^{^a}$ of length $m+a$ where $m$ is even and $m\geq 2^a$. Then $S$ has a zero-sum subsequence of length $m$.  
\end{thm}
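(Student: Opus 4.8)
The plan is to reduce the statement about zero-sum subsequences of length exactly $m$ to the (already available) fact that $D(\Z_2^{^a})=a+1$, together with a counting/averaging trick. Write $|S| = m+a$. The idea is to first extract from $S$ a subsequence $T_0$ that is a zero-sum sequence and whose length has the right parity, and then to adjust its length to be exactly $m$ by swapping in elements from outside. Concretely, I would proceed as follows.

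First I would repeatedly apply $D(\Z_2^{^a}) = a+1$ to peel off short zero-sum subsequences. As long as at least $a+1$ terms remain, there is a nonempty zero-sum subsequence among them of length at most $a+1$; removing it, we continue. Since we start with $m+a$ terms and stop only when at most $a$ terms remain, this produces a partition of a sub-collection of $S$ into zero-sum blocks $B_1,\dots,B_t$, using in total at least $m$ of the $m+a$ terms, with each $|B_j|\le a+1$. Now I would select a sub-collection of the blocks whose total length lies in the window $[\,m-a,\,m\,]$: because each block has length at most $a+1 \le$ (and in fact we can be slightly more careful to get length at most... ) — here the key point is that consecutive partial sums of the block-lengths increase by at most $a+1$ each step and reach at least $m$, so some partial sum $\ell$ satisfies $m-a \le \ell \le m$ (using $a+1 \le a+1$; if a single block overshoots we note its length is at most $a+1$, and since $m \ge 2^a \ge a+1$ for $a\ge 1$, this is fine). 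Call the union of these chosen blocks $T$; it is a zero-sum subsequence with $m-a \le |T| \le m$ and, being a union of zero-sum blocks, $T$ is zero-sum regardless of its exact length.

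Next I would fix the length. Let $d = m - |T| \in [0,a]$, and note $d$ is even: indeed $|S|=m+a$, $m$ is even, so... (parity of $d$ needs care). Here is the cleaner route: among the $a$-or-fewer leftover terms together with the unused blocks, I have at least $a$ terms of $S$ outside $T$. I want to move $d$ of them into $T$ (to reach length $m$) while preserving the zero-sum property — which I can do by instead choosing, among the $\ge a$ terms of $S \setminus T$, a subset $W$ with $|W| = d$ and sum equal to $0$; then $T \cup W$ works. To find such $W$: if $d$ is even, apply Lemma \ref{griv}-type reasoning — actually the right tool is that in $\Z_2^{^a}$ any $a$ vectors have a nonempty zero-sum sub-multiset of controlled size, and more usefully, Observation \ref{zsn}: translating is harmless. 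So I would instead arrange from the start that $|T|$ has the same parity as $m$ by being slightly more careful in the block-selection step (each $|B_j|$ can be taken to have a prescribed contribution mod $2$ by pairing blocks), forcing $d$ even, and then pick $W$ as an even-size zero-sum subset of the $\ge a$ remaining terms via $D(\Z_2^{^a})=a+1$ applied to pairs, or via a direct linear-algebra argument over $\Z_2$.

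The main obstacle I anticipate is exactly this parity/length bookkeeping: getting a zero-sum subsequence of length \emph{exactly} $m$ rather than merely $\le m$ with the correct parity. The cleanest fix is probably to work with the auxiliary group $\Z_2^{^a} \times \Z_2$, appending a $1$ in the last coordinate to every term of $S$, so that a zero-sum subsequence in the enlarged group is precisely an even-length zero-sum subsequence in $\Z_2^{^a}$; then $D(\Z_2^{^{a+1}}) = a+2$ governs the extraction, and the condition $m \ge 2^a$ (and $m$ even) is what lets the greedy block-removal reach a total of exactly $m$. I would double-check that $m \ge 2^a$ — rather than merely $m \ge a+1$ — is genuinely needed; it should enter when we need enough "room" to land a partial sum of block-lengths exactly on $m$, and the extremal sequences behind $C(\Z_2^{^a}) = 2^a$ and $D(\Z_2^{^a}) = a+1$ show the bound is of the right shape.
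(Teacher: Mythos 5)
Your reduction to $D(\Z_2^{^a})=a+1$ via greedy block extraction does get you a zero-sum subsequence $T$ with $m-a\le |T|\le m$, but the final step --- finding a set $W$ of \emph{exactly} $d=m-|T|$ leftover terms with sum zero --- is the whole difficulty, and nothing in your argument produces it: the terms of $S$ outside $T$ need not admit any zero-sum subset of the prescribed cardinality (in $\Z_2^{^a}$ two terms sum to zero only if they are equal, so if the leftover terms are pairwise distinct there is no zero-sum subset of size $2$, and every zero-sum subset among them could have the wrong size for your $d$). Passing to $\Z_2^{^{a+1}}$ by appending a coordinate fixes the parity of $d$ but not its value, so the same gap remains. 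A symptom of the problem is that the hypothesis $m\ge 2^a$ never actually does any work in your argument, yet the statement is false without it: in $\Z_2^{^2}$ the sequence $(e_1,e_2,e_1+e_2,0)$ has length $2+2$ and no zero-sum subsequence of length $2$.

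The paper closes exactly this gap by splitting $S$ differently. After translating (Observation \ref{zsn}) so that $0$ occurs with odd multiplicity, let $S'$ consist of one copy of each term occurring an odd number of times; then $S-S'$ has every term with even multiplicity and hence has zero-sum subsequences of \emph{every} even length up to $|S-S'|$ --- this is the flexible reservoir that your $W$ was supposed to come from. Since the terms of $S'$ are distinct, $|S'|\le 2^a\le m$, and this is precisely where $m\ge 2^a$ enters. Taking $T$ to be a zero-sum subsequence of $S'$ of maximal length $l$, the bound $|S'|-l\le a$ from $D(\Z_2^{^a})=a+1$ gives $l\le m\le (m+a-|S'|)+l$, and the even-multiplicity part supplies the missing $m-l$ terms when $l$ is even, or $m-(l-1)$ terms after discarding the term $0\in T$ when $l$ is odd. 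To salvage your block-decomposition approach you would need the leftover set to achieve all even lengths as zero-sum subsets, which is what the even-multiplicity reservoir provides and an arbitrary residue of a greedy extraction does not.
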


\begin{proof}
Let $m\geq 2^a$ and $S=(x_1,\ldots,x_k)$ be a sequence in $\Z_2^{^a}$ of length $k=m+a$.  Suppose each term of $S$ occurs an even number of times in $S$. As $x+x=0$ for any $x\in \Z_2^{^a}$, it follows that $S$ has a zero-sum subsequence of any even length and hence also of length $m$. So we may assume that some term of $S$ occurs an odd number of times. By Observation \ref{zsn} we can assume that that term is $0$.

Let $S'$ be the unique subsequence of $S$ whose  terms are all distinct and such that each term of $S-S'$ occurs an even number of times in $S-S'$.  %As, $0$ occurs an odd number of times in $S$, so, $0$ is a term of $T$.  
%Observe that $0$ is a term of $S'$, a fact which we will use later. 
Let the length of $S'$ be $k'$. Suppose $k'\leq a$. It follows that $k-k'=(m+a)-k'$ and so $m\leq k-k'$. As $m$ is even, we see that $S-S'$ (and hence $S$) has a zero-sum subsequence of length $m$.

So we may assume that $k'\geq a+1$. By Theorem \ref{d2a} we get $D(\Z_2^{^a})=a+1$ and so we see that $S'$ has a zero-sum subsequence. Let $T$ be a zero-sum subsequence of $S'$ having  largest length. As $0$ occurs an odd number of times in $S$, it is a term of $S'$ and so $0$ is a term of $T$. Let the length of $T$ be $l$. As $D(\Z_2^{^a})=a+1$ and $S'-T$ does not have any zero-sum subsequence, it follows that $k'-l\leq a$. We now claim that 
\begin{equation}\label{ea}
l\leq m\leq (k-k')+l.
\end{equation} 
As all terms of $S'$ are distinct, it follows that $k'\leq 2^a$. So as $l\leq k'$ and $2^a\leq m$, we see that $l\leq m$. As $k'-l\leq a$ and $m=k-a$, we have that $m\leq (k-k')+l$. This proves our claim (\ref{ea}).

Suppose $l$ is even. From (\ref{ea}) we see that $m-l\leq k-k'$.  As $m-l$ is even and each term of $S-S'$ occurs an even number of times, it follows that $S-S'$ has a zero-sum subsequence having length $m-l$. As $T$ is a zero-sum subsequence of $S'$ having length $l$, we get a zero-sum subsequence of $S$ having length $m$.

Suppose $l$ is odd. As both $m$ and $k-k'$ are even, from (\ref{ea}) we see that $m\leq (k-k')+l-1$. As $m-(l-1)$ is even and each term of $S-S'$ occurs an even number of times, it follows that $S-S'$ has a zero-sum subsequence having length $m-(l-1)$. As $0$ is a term of $T$ which has length $l$, we get a zero-sum subsequence of $S'$ having length $l-1$. So we get a zero-sum subsequence of $S$ having length $m$. 
\end{proof}

\begin{lem}
Let $v_1,\ldots,v_a\in \Z_2^{^{m+a}}$ where $m$ is even and $m\geq 2^a$. Then there exists a non-zero vector $w\in \Z_2^{^{m+a}}$ such that $w$ has exactly $m$ terms which are one and for $i\in [1,a]$ we have $w\cdot v_i=0$.     
\end{lem}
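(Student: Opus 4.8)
The plan is to mimic the proofs of Lemma \ref{gricv} and Lemma \ref{griv}, but to invoke Theorem \ref{e2a} in place of Theorem \ref{c2a} or Theorem \ref{d2a}. First I would form the matrix $P$ of size $(m+a)\times a$ whose columns are the vectors $v_1,\ldots,v_a$. For any $w\in\Z_2^{^{m+a}}$, the product $wP$ is the vector in $\Z_2^{^a}$ obtained by summing those rows of $P$ indexed by the coordinates of $w$ that equal $1$. Hence finding a vector $w$ as in the statement is the same as finding a subsequence of length exactly $m$, among the $m+a$ rows of $P$ regarded as a sequence $S$ in $\Z_2^{^a}$, whose sum is the zero vector.

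Since $m$ is even, $m\geq 2^a$, and $S$ has length $m+a$, Theorem \ref{e2a} applies and produces a zero-sum subsequence of $S$ of length $m$. Let $Y\su\{1,\ldots,m+a\}$ be the set of indices of the rows making up this subsequence, so that $|Y|=m$, and let $w$ be the vector whose $j^{th}$ coordinate is $1$ if and only if $j\in Y$. Then $w$ has exactly $m$ coordinates equal to one, and $wP=0$ says precisely that $w\cdot v_i=0$ for each $i\in [1,a]$. Finally $w\neq 0$ since $m\geq 2^a\geq 2>0$, so $Y$ is non-empty.

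I do not expect any real obstacle here: once Theorem \ref{e2a} is available, the argument is a direct translation through the same row/column dictionary already used twice in the paper. The only bookkeeping points are that the ambient dimension $m+a$ matches the sequence length required by Theorem \ref{e2a}, and that the parity and size hypotheses on $m$ are exactly those assumed in this lemma, so they are inherited for free. The genuine content has already been expended in the proof of Theorem \ref{e2a} (and, underneath it, in the determination of $D(\Z_2^{^a})$).
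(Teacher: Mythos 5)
Your proof is correct and is exactly the argument the paper intends: it forms the matrix $P$ whose columns are the $v_i$, views its $m+a$ rows as a sequence in $\Z_2^{^a}$, and applies Theorem \ref{e2a} to extract a zero-sum subsequence of length $m$, which is precisely the paper's one-line instruction to repeat the proof of Lemma \ref{griv} with Theorem \ref{e2a} in place of Theorem \ref{d2a}. No issues.
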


\begin{proof}
The proof of this lemma is similar to the proof of Lemma \ref{griv}, where we use Theorem \ref{e2a} in place of Theorem \ref{d2a}. 
\end{proof}

The next result follows from this lemma. 

\begin{lem}\label{grie}
Let $X_1,\ldots,X_a$ be subsets of $\{1,\ldots,m+a\}$ where $m$ is even and $m\geq 2^a$. Then there exists a non-empty subset $Y\su\{1,\ldots,m+a\}$ such that $|Y|=m$ and for each $i\in [1,a]$ we have $|\,Y\cap X_i\,|$ is even.   
\end{lem}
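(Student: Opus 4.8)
The plan is to deduce Lemma \ref{grie} from the preceding lemma by the same dictionary between subsets of an index set and vectors over $\Z_2$ that was used to pass from Lemma \ref{griv} to Lemma \ref{grif} and from Lemma \ref{gricv} to Lemma \ref{gric}. First I would identify each subset $X_i\su\{1,\ldots,m+a\}$ with its indicator vector $x_{X_i}\in\Z_2^{^{m+a}}$, whose $j^{th}$ coordinate is one precisely when $j\in X_i$; then for any subset $Y$ with indicator vector $w$, the dot product $w\cdot x_{X_i}$ counts $|Y\cap X_i|$ modulo $2$, so $w\cdot x_{X_i}=0$ is equivalent to $|Y\cap X_i|$ being even. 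Under this correspondence, the number of coordinates of $w$ equal to one is exactly $|Y|$.

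Next I would apply the preceding lemma to the vectors $v_i=x_{X_i}$ for $i\in[1,a]$, which is legitimate since $m$ is even and $m\geq 2^a$ by hypothesis. The lemma produces a non-zero $w\in\Z_2^{^{m+a}}$ with exactly $m$ ones and with $w\cdot x_{X_i}=0$ for each $i\in[1,a]$. Letting $Y$ be the subset of $\{1,\ldots,m+a\}$ whose indicator vector is $w$, we get $|Y|=m$ (so $Y$ is non-empty, as $m\geq 2^a\geq 2$) and $|Y\cap X_i|$ even for every $i\in[1,a]$, which is exactly the conclusion sought.

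Since this is a direct translation with no real content beyond the set-vs-vector dictionary, I do not expect any genuine obstacle; the only point requiring a word of care is making sure the non-emptiness of $Y$ is recorded (it follows from $|Y|=m\geq 2^a\geq 1$, equivalently from $w\neq 0$), and that the parity interpretation of the dot product over $\Z_2$ is stated. In fact, given how closely this mirrors the passage from Lemma \ref{griv} to Lemma \ref{grif}, it would be reasonable to simply remark that the proof is identical to that of Lemma \ref{grif} with the preceding lemma used in place of Lemma \ref{griv}.
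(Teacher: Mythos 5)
Your proposal is correct and is exactly the argument the paper intends: the paper simply states that Lemma \ref{grie} ``follows from'' the preceding lemma, and the indicator-vector dictionary you spell out (identical to the passage from Lemma \ref{gricv} to Lemma \ref{gric}) is the intended translation. No gaps.
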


\begin{thm}\label{eun}
Let $a=\Omega(n)$ and $m$ be an even number such that $m\geq 2^a$. Suppose $S$ is a sequence in $\Z_n$ having length $m+a$. Then $S$ has a $U(n)$-weighted zero-sum subsequence of length $m$. 
\end{thm}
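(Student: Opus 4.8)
The plan is to mirror the structure of the proof of Theorem \ref{cun}, replacing the role of the "consecutive" combinatorial lemma (Lemma \ref{gric}) by the new "fixed even size" lemma (Lemma \ref{grie}), and replacing Lemma \ref{gri} by itself (it is already exactly the tool we need about $U(p^r)$-weighted zero-sums). First I would write $S=(x_1,\ldots,x_{m+a})$ and, for each prime divisor $p$ of $n$ and each $i\in[1,v_p(n)]$, set $X_i^{(p)}=\{\,j:x_j\not\equiv 0~(mod~p^i)\,\}$. Since $a=\Omega(n)=\sum_{p}v_p(n)$, the collection $\{\,X_i^{(p)}:p\mid n~\textrm{prime},~i\in[1,v_p(n)]\,\}$ consists of exactly $a$ subsets of $[1,m+a]$. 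Because $m$ is even and $m\geq 2^a$, Lemma \ref{grie} applies and yields a subset $Y\su[1,m+a]$ with $|Y|=m$ and $|\,Y\cap X_i^{(p)}\,|$ even for every such pair $(p,i)$.

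Next I would let $T$ be the subsequence of $S$ consisting of those $x_j$ with $j\in Y$, so $T$ has length $m$. Then for each prime divisor $p$ of $n$, writing $r=v_p(n)$, the set $Y\cap X_i^{(p)}=\{\,j\in Y:x_j\not\equiv 0~(mod~p^i)\,\}$ has even size for each $i\in[1,r]$, so Lemma \ref{gri} (applied with the sequence $T^{(p)}$ in $\Z_{p^r}$) shows that $T^{(p)}$ is a $U(p^r)$-weighted zero-sum sequence. By Observation \ref{obs}, since this holds for every prime divisor $p$ of $n$, the sequence $T$ is itself a $U(n)$-weighted zero-sum sequence. Thus $T$ is a $U(n)$-weighted zero-sum subsequence of $S$ of length $m$, which is what we wanted.

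The only genuine content is already packaged in the lemmas: Lemma \ref{grie} (itself resting on Theorem \ref{e2a}, hence on the value $D(\Z_2^{^a})=a+1$) does the "control the size while killing $a$ linear conditions over $\Z_2$" work, and Lemma \ref{gri} converts the parity conditions on the support sets into the $U(p^r)$-weighted zero-sum property. So I do not expect a real obstacle here; the proof is essentially a bookkeeping argument combining these pieces, and the main point to be careful about is that the number of sets $X_i^{(p)}$ is exactly $a=\Omega(n)$, which is precisely the number of linear conditions Lemma \ref{grie} is allowed to handle while still forcing $|Y|=m$. (One should also note that $Y$ is automatically nonempty since $|Y|=m\geq 2^a\geq 1$, so $T$ is a genuine subsequence.)
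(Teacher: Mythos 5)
Your proposal is correct and is exactly the argument the paper intends: the paper's proof of Theorem \ref{eun} simply says to repeat the proof of Theorem \ref{dun} with Lemma \ref{grie} in place of Lemma \ref{grif}, which is precisely the bookkeeping you carried out (the proofs of Theorems \ref{dun} and \ref{cun} have the same structure, so mirroring either one gives the same argument). No gaps; the key point you flag --- that the number of sets $X_i^{(p)}$ is exactly $a=\Omega(n)$ --- is indeed the only thing to check.
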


\begin{proof}
The proof of this theorem is similar to the proof of Theorem \ref{dun}, where we use Lemma \ref{grie} in place of Lemma \ref{grif}. 
\end{proof}

\begin{rem}
Theorem 1.3 of \cite{G} is a generalization of Theorem \ref{eun}. The argument which is given in \cite{G} for even $n$, uses a weaker version of Theorem 1.1 of \cite{E}. We give a different argument in the proof of Theorem \ref{eun} which is more direct.
%For any number $n$ we have $n\geq 2^{\Omega(n)}$ and so from Theorem \ref{eun} we can rederive that $E_{U(n)}(n)=n+\Omega(n)$.
\end{rem}

%The next result can be shown by a similar argument as is given in Section 3 of \cite{G}. 

\begin{cor}\label{code}
Let $m$ be even and $m\geq 2^a$. Then every $m$-dimensional subspace of $\Z_2^{^{m+a}}$ has a vector $v$ such that exactly $m$ coordinates of $v$ are equal to one.  
\end{cor}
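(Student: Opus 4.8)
The plan is to dualize: an $m$-dimensional subspace of $\Z_2^{m+a}$ is the common kernel of $a$ linear functionals given by dot product against suitable vectors, and then Lemma~\ref{grie} (equivalently, the lemma immediately preceding it) produces the required vector directly.

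First I would let $W\su\Z_2^{m+a}$ be an $m$-dimensional subspace and pass to its orthogonal complement $W^\perp=\{v:v\cdot w=0\text{ for all }w\in W\}$ with respect to the dot product. Since the dot product on $\Z_2^{m+a}$ is a non-degenerate symmetric bilinear form (its Gram matrix is the identity), we have $\dim W+\dim W^\perp=m+a$, so $\dim W^\perp=a$, and moreover $(W^\perp)^\perp=W$. Choosing vectors $v_1,\ldots,v_a$ that span $W^\perp$, we then have $W=(W^\perp)^\perp=\{w\in\Z_2^{m+a}:w\cdot v_i=0\text{ for all }i\in[1,a]\}$.

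Next I would translate into the setting of Lemma~\ref{grie}: let $X_i\su\{1,\ldots,m+a\}$ be the support of $v_i$, i.e.\ the set of coordinates of $v_i$ that equal one, so that for the indicator vector $x_Y$ of a subset $Y$, the condition $x_Y\cdot v_i=0$ is precisely the condition that $|Y\cap X_i|$ is even. Since $m$ is even and $m\geq 2^a$, Lemma~\ref{grie} yields a subset $Y\su\{1,\ldots,m+a\}$ with $|Y|=m$ and $|Y\cap X_i|$ even for every $i\in[1,a]$. The vector $v=x_Y$ then lies in $W$ and has exactly $m$ coordinates equal to one, which is what we want. (Alternatively one can invoke the vector version preceding Lemma~\ref{grie} and skip the support translation entirely.)

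There is essentially no serious obstacle; the only step that deserves a word of care is the identity $(W^\perp)^\perp=W$ over $\Z_2$. This is the usual consequence of non-degeneracy of the standard dot product together with the dimension count $\dim W+\dim W^\perp=m+a$, and it holds even though $W\cap W^\perp$ need not be trivial, so the form being isotropic causes no problem.
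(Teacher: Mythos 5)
Your proof is correct and follows essentially the same route as the paper: both pass to the orthogonal complement $W^\perp$ of dimension $a$, use $(W^\perp)^\perp=W$, and extract a weight-$m$ vector in the common kernel via the zero-sum result (the paper applies Theorem \ref{e2a} directly to the columns of the matrix whose rows are a basis of $W^\perp$, while you invoke the equivalent Lemma \ref{grie} / its vector version, which is derived from that same theorem). No gaps.
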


\begin{proof}
Let $W$ be an $m$-dimensional subspace of $\Z_2^{^{m+a}}$. Then the dimension of $W^\perp$ is $a$. Let $\{v_1,\ldots,v_a\}$ be a basis of $W^\perp$ and $A$ be the $a\times (m+a)$ matrix whose rows are the basis vectors of $W^\perp$. Then $A$ gives a map $\Z_2^{^{m+a}}\to\Z_2^{^a}$ whose kernel is $(W^\perp)^\perp$ which can be shown to be $W$. For each $j\in [1,m+a]$, let $C_j$ denote the $j^{th}$ column of $A$. By Theorem \ref{e2a} we can find a subset $I\su [1,m+a]$ such that $|I|=m$ and $\sum_{i\in I}C_i=0$. Let $v=(x_1,\ldots,x_{m+a})\in \Z_2^{^{m+a}}$ be such that $x_i=1$ if and only if $i\in I$. As we have that $Av=\sum_{i\in I}C_i$, it follows that $v\in ker~A=W$. Hence, we see that $W$ has a vector $v$ such that exactly $m$ coordinates of $v$ are equal to one.  
\end{proof}

\begin{defn}
For a vector $v\in\Z_2^{^n}$ we define the  weight of $v$ to be the number of coordinates of $v$ which are equal to one.
Let $l(n,\bar m)$ denote the largest integer $k$ such that there is a subspace of $\Z_2^{^n}$ of dimension $k$ which does not have any vector of weight $m$. The notation $l(n,\bar m)$ was introduced in \cite{E}. 
\end{defn}

\begin{cor}\label{eno}
Let $m$ and $n$ be such that $m$ be even, $m>n$ and $m\geq 2^{n-m}$. Then we have that $l(n,\bar m)=m-1$.
\end{cor}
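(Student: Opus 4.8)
The plan is to establish the two inequalities $l(n,\bar m)\geq m-1$ and $l(n,\bar m)\leq m-1$ separately. Throughout I set $a=n-m$, so that the hypotheses stated here assert exactly that $m$ is even and $m\geq 2^{a}$, while $\Z_2^{^{m+a}}=\Z_2^{^n}$; these are precisely the data needed to invoke Corollary~\ref{code}, and the inequality $m-1\leq n$ is what the lower-bound construction needs.

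For the lower bound I would exhibit an explicit subspace of $\Z_2^{^n}$ of dimension $m-1$ containing no vector of weight $m$. The natural candidate is the coordinate subspace $W_0$ of all vectors $(x_1,\ldots,x_n)\in\Z_2^{^n}$ with $x_j=0$ for every $j$ with $m\leq j\leq n$, i.e. those supported on $\{1,\ldots,m-1\}$. Since $m-1\leq n$, this $W_0$ is well defined and has dimension $m-1$, and every vector of $W_0$ has weight at most $m-1<m$, so $W_0$ has no vector of weight $m$. Hence $l(n,\bar m)\geq m-1$.

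For the upper bound I would argue that no subspace of dimension $\geq m$ can avoid all vectors of weight $m$. Indeed, by Corollary~\ref{code} (with $a=n-m$ as above) every $m$-dimensional subspace of $\Z_2^{^n}$ contains a vector of weight $m$; and if $W\su\Z_2^{^n}$ has dimension at least $m$, then choosing any $m$-dimensional subspace $W'\su W$ and applying Corollary~\ref{code} to $W'$ yields a weight-$m$ vector lying in $W'\su W$. Therefore every subspace of $\Z_2^{^n}$ that has no vector of weight $m$ has dimension at most $m-1$, so $l(n,\bar m)\leq m-1$. Combining this with the lower bound gives $l(n,\bar m)=m-1$.

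I do not expect a genuine obstacle here; the proof is short once Corollary~\ref{code} is available. The only points that require a little care are checking that the numerology $a=n-m$ makes the hypotheses of that corollary match those stated in this one, and the small reduction that a subspace of dimension exceeding $m$ still contains a weight-$m$ vector (obtained by passing to an $m$-dimensional subspace). The lower-bound half is entirely elementary, being just the coordinate subspace on the first $m-1$ coordinates.
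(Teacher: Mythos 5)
Your proof is correct and essentially identical to the paper's: the lower bound comes from the coordinate subspace spanned by $e_1,\ldots,e_{m-1}$, and the upper bound from Corollary~\ref{code} (you spell out the reduction from a subspace of dimension greater than $m$ to an $m$-dimensional subspace, which the paper leaves implicit). One remark: the hypothesis ``$m>n$'' in the statement must be a typo for $m<n$, since both your argument and the paper's need $a=n-m\geq 0$ to invoke Corollary~\ref{code} and $m-1\leq n$ for the lower-bound construction; you handle this exactly as the paper does.
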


\begin{proof} 
For each $i\in [1,n]$ let $e_i$ denote the vector in $\Z_2^{^n}$ whose only non-zero coordinate is one which occurs in the $i^{th}$ position. The subspace of $\Z_2^{^n}$ which is spanned by $\{e_1,\ldots,e_{m-1}\}$ does not have any vector of weight $m$ and hence it follows that $l(n,\bar m)\geq m-1$. 
So by using Corollary \ref{code} we see that if $m$ is even, $m>n$ and $m\geq 2^{n-m}$, we have that $l(n,\bar m)=m-1$.
\end{proof}

Corollary \ref{eno} is a weaker version of Theorem 1.1 of \cite{E}. We have rederived the result in Corollary \ref{eno} here as it is an easy consequence of Theorem \ref{e2a}.

\section{Extremal sequences for $U(n)$ where $n=2^k$}

\begin{defn}
Let $A$ be a subgroup of $U(n)$. Suppose $S=(x_1,\ldots,x_k)$ and $T=(y_1,\ldots,y_k)$ are sequences in $\Z_n$. We say that $S$ and $T$ are $A$-equivalent if there is a unit $c\in U(n)$, a permutation $\sigma\in S_k$ and $a_1,\ldots,a_k\in A$ such that for each $i\in [1,k]$ we have $c\,y_{\sigma(i)}=a_ix_i$. 
\end{defn}

\begin{rem}
Let $A$ be a subgroup of $U(n)$. If $S$ is an $E$-extremal sequence (a $D$-extremal sequence) for $A$ and if $S$ and $T$ are $A$-equivalent, then $T$ is also an $E$-extremal sequence (a $D$-extremal sequence) for $A$. 
\end{rem}

When $n=p^r$ where $p$ is an odd prime, in Theorem 3 of \cite{AMP} it was shown that a sequence in $\Z_n$ is an $E$-extremal sequence for $U(n)$ if and only if it is $U(n)$-equivalent to the sequence $$(\underbrace{0,0,\ldots,0}_\text{$(n-1)$ times},1,p,p^2,\ldots,p^{r-1}).$$

In Theorem 4 of \cite{AMP} it was shown that when $n=2^r$, a sequence in $\Z_n$ is a $D$-extremal sequence for $U(n)$ if and only if it is $U(n)$-equivalent to the sequence $(1,2,2^2,\ldots,2^{r-1})$.

The next result has been shown in \cite{G} and \cite{L}. We can also derive this as a consequence of Theorem \ref{eun}. 

\begin{thm}\label{en}
For any $n$ we have $E_{U(n)}(n)=n+\Omega(n)$. 
\end{thm}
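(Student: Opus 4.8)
The plan is to derive the value of $E_{U(n)}(n)$ from the result of Yuan and Zeng together with the value of $D_{U(n)}(n)$, and then observe that this also follows from Theorem \ref{eun}. First I would recall that by \cite{YZ} we have $E_A(n) = D_A(n) + n - 1$ for any $A \su \Z_n$; applying this with $A = U(n)$ and invoking Theorem \ref{dun}, which gives $D_{U(n)}(n) = \Omega(n) + 1$, yields immediately $E_{U(n)}(n) = (\Omega(n)+1) + n - 1 = n + \Omega(n)$. This is the short route and it uses only results already available in the excerpt.

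Alternatively, and more in the spirit of the paper (which emphasizes deriving things directly), I would argue both inequalities by hand. For the lower bound $E_{U(n)}(n) \geq n + \Omega(n)$, I would exhibit a sequence of length $n + \Omega(n) - 1$ with no $U(n)$-weighted zero-sum subsequence of length $n$: take the sequence consisting of $n-1$ copies of $0$ together with the terms $1, p_1, p_1^2, \ldots$ ranging over $p_i^{j}$ for each prime power $p_i^{r_i} \parallel n$ and $j \in [0, r_i - 1]$, so that the non-zero part has length $\Omega(n)$. Any length-$n$ subsequence must use at least one non-zero term; projecting modulo each $p_i^{r_i}$ via Observation \ref{obs} and using that the non-zero exponents $p_i^{j}$ form exactly a $D$-extremal configuration for $U(p_i^{r_i})$ (the $p$-adic valuation argument of Lemma \ref{gri} run in reverse), one checks no such subsequence can be a $U(n)$-weighted zero-sum. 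For the upper bound $E_{U(n)}(n) \leq n + \Omega(n)$, I would apply Theorem \ref{eun}: given a sequence $S$ of length $n + \Omega(n)$, set $a = \Omega(n)$ and $m = n$; since $n$ is even precisely when some $p_i = 2$, one must be slightly careful, but in all cases one can pass to a suitable even $m \geq 2^a$ by first extracting, if necessary, a short controlled sub-block, then apply Theorem \ref{eun} to obtain a $U(n)$-weighted zero-sum subsequence of the required length $n$.

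The main obstacle I anticipate is the parity and size hypotheses in Theorem \ref{eun}, namely that $m$ must be even and $m \geq 2^{\Omega(n)}$, whereas in Theorem \ref{en} we want $m = n$ with $n$ possibly odd and possibly much smaller than $2^{\Omega(n)}$. Reconciling these will require either the Yuan--Zeng reduction (which sidesteps the issue entirely) or a separate combinatorial lemma handling the odd case and the case $n < 2^{\Omega(n)}$ directly — essentially the content of Theorem 1.3 of \cite{G}. Given that the excerpt explicitly flags Theorem 1.3 of \cite{G} as the general statement and cites \cite{YZ} for the relation $E_A(n) = D_A(n) + n - 1$, the cleanest and intended proof is almost certainly the two-line deduction $E_{U(n)}(n) = D_{U(n)}(n) + n - 1 = \Omega(n) + n$, with the remark that it also follows from Theorem \ref{eun} recorded for completeness.
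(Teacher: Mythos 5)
Your primary route is correct and complete, and it is actually more of a proof than the paper gives: in the paper this theorem carries no proof at all, being attributed to \cite{G} and \cite{L} with the one-line remark that it ``can also be derived as a consequence of Theorem \ref{eun}.'' Your deduction $E_{U(n)}(n)=D_{U(n)}(n)+n-1=(\Omega(n)+1)+n-1=n+\Omega(n)$ is legitimate and non-circular, since the paper proves Theorem \ref{dun} directly (via $D(\Z_2^{^a})=a+1$ and Lemmas \ref{grif} and \ref{gri}) without ever using the value of $E_{U(n)}(n)$, and the Yuan--Zeng identity $E_A(n)=D_A(n)+n-1$ is quoted as an external result applicable to $A=U(n)$. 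What this buys you is a self-contained derivation from ingredients already established in the paper; what the paper's citation buys is independence from \cite{YZ}. Regarding your alternative ``by hand'' argument: the lower bound is fine (and in fact is immediate from the inequality $E_A(M)\geq D_A(M)+|M|-1$ stated in the paper's opening remark, so the explicit construction is not even needed), but the upper bound via Theorem \ref{eun} genuinely only works when $n$ is even --- note that $n\geq 2^{\Omega(n)}$ always holds, so the size hypothesis is automatic and parity is the sole obstruction --- and your proposed fix of ``extracting a short controlled sub-block'' to repair the odd case is not a workable argument as stated; the odd case requires the separate treatment of \cite{G} or \cite{L} (or, as you conclude, the Yuan--Zeng reduction, which is indeed the clean way to close the argument). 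You correctly identified this obstacle and chose the right fallback, so the proposal as a whole stands.
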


\begin{thm}
A sequence in $\Z_{2^r}$ is an $E$-extremal sequence for $U(2^r)$ if it is $U(2^r)$-equivalent to a sequence of length $n+r-1$ in which $2^i$ occurs exactly once for each $i\in [0,r-2]$ and there exists an odd number $m\in [1,2^r-1]$ such that $2^{r-1}$ occurs exactly $m$ times  and the remaining terms are zero. 
\end{thm}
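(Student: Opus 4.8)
The plan is to argue directly inside $\Z_{2^r}$, using only the value $E_{U(2^r)}(2^r)=2^r+r$ from Theorem~\ref{en} (recall $\Omega(2^r)=r$). Write $n=2^r$. The sequence in the statement has length $(r-1)+m+(n-m)=n+r-1=E_{U(n)}(n)-1$, so it only remains to show it has no $U(n)$-weighted zero-sum subsequence of length $n$. By the remark preceding the theorem, $E$-extremality is preserved under $U(n)$-equivalence, so it suffices to treat the standard representative $S$ consisting of $2^i$ (once each) for $i\in[0,r-2]$ together with $m$ copies of $2^{r-1}$ and $n-m$ zeros, where $m$ is odd and $1\le m\le n-1$.

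Suppose, for a contradiction, that some subsequence $T$ of $S$ of length $n$ is a $U(n)$-weighted zero-sum. Since all zero terms of $S$ are interchangeable and all copies of $2^{r-1}$ are interchangeable, the data of $T$ that matters is the number $k$ of zeros it uses, the number $t$ of copies of $2^{r-1}$ it uses, and the set $E\su[0,r-2]$ of exponents $i$ with $2^i\in T$; these satisfy $0\le k\le n-m$, $0\le t\le m$, and $k+t+|E|=n$, so $t+|E|=n-k\ge m$. A valid choice of weights gives units $a_i$ $(i\in E)$ and $b_1,\dots,b_t$ with $\sum_{i\in E}a_i2^i+2^{r-1}\sum_{j=1}^{t}b_j\equiv 0\pmod{2^r}$ (the zero terms of $T$ contribute $0$ whatever their weights). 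As each $b_j$ is odd, $\sum_j b_j\equiv t\pmod 2$, so the copies of $2^{r-1}$ together contribute exactly $0$ when $t$ is even and exactly $2^{r-1}$ when $t$ is odd, independently of the units chosen.

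The key elementary point is that when $E\neq\varnothing$ and $i_0=\min E$, we can factor $\sum_{i\in E}a_i2^i=2^{i_0}\big(a_{i_0}+\sum_{i\in E,\,i>i_0}a_i2^{\,i-i_0}\big)$, where the bracketed integer is odd (one odd term plus even terms) and hence a unit of $\Z_{2^r}$; thus $\sum_{i\in E}a_i2^i$ is divisible by $2^{i_0}$ but not by $2^{i_0+1}$, and since $i_0\le r-2$ it is nonzero and is not $2^{r-1}$. I would then split into three cases. If $E=\varnothing$, then $t\ge m$ and $t\le m$ force $t=m$, which is odd, and the left-hand side above equals $2^{r-1}\cdot(\text{odd})=2^{r-1}\neq 0$. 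If $E\neq\varnothing$ and $t$ is even, the left-hand side equals $\sum_{i\in E}a_i2^i\neq 0$. If $E\neq\varnothing$ and $t$ is odd, the left-hand side is $\sum_{i\in E}a_i2^i+2^{r-1}=2^{i_0}\big(u+2^{\,r-1-i_0}\big)$ with $u$ odd and $r-1-i_0\ge 1$, hence $2^{i_0}\cdot(\text{odd})\neq 0$. In every case the supposed weighted sum is nonzero, a contradiction; therefore $S$ is $E$-extremal for $U(2^r)$.

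The proof is short, so there is no single hard step; the points that need care are the reduction to the standard representative via $U(n)$-equivalence, checking that recording $T$ by $(k,t,E)$ really exhausts all length-$n$ subsequences, the degenerate case $r=1$ (where $[0,r-2]=\varnothing$ forces $E=\varnothing$ and $m=1$), and — the crux — the observation that the copies of $2^{r-1}$ can only ever contribute $(t\bmod 2)\,2^{r-1}$, which collapses the whole question to the one-line divisibility computation.
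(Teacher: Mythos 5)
Your proof is correct and follows essentially the same route as the paper's: reduce to the standard representative, note the length is $E_{U(2^r)}(2^r)-1$ by Theorem~\ref{en}, and rule out a $U(2^r)$-weighted zero-sum subsequence $T$ of length $2^r$ by a $2$-adic divisibility argument on the least exponent $i_0$ occurring in $T$ (the paper's set $J$ is your set $E$), with the $E=\varnothing$ case forcing all $m$ copies of $2^{r-1}$ into $T$ so that the weighted sum is an odd multiple of $2^{r-1}$. Your explicit split on the parity of $t$ is slightly more detailed than the paper's reduction modulo $2^{i_0+1}$, but the underlying idea is identical.
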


\begin{proof}
Let $S$ be a sequence as in the statement of the theorem. Suppose $T$ is a $U(2^r)$-weighted zero-sum subsequence of $S$ of length $2^r$. Consider the set $J=\{\,i\in [0,r-2]:2^i\in T\,\}$. Suppose $J\neq \emptyset$. Then $J$ has a least element $i_0$. As $T$ cannot have only one non-zero term, we see that $2^i$ is a term of $T$ for some $i\in [i_0+1,r-1]$. As $2^{i_0+1}$ divides all the terms of $T$ except $2^{i_0}$ and as $T$ is a $U(2^r)$-weighted zero-sum sequence, we get the contradiction that a unit is divisible by 2.

Thus we see that $J=\emptyset$. It follows that $T$ is a sequence of length $2^r$ in which there are an odd number of non-zero terms  all of which are equal to $2^{r-1}$. As $T$ is a $U(2^r)$-weighted zero-sum sequence, we see that an odd multiple of $2^{r-1}$ is zero. This gives the contradiction that $2^{r-1}=0$. Hence, we see that $S$ is a sequence of length $2^r+r-1$ which does not have any $U(2^r)$-weighted zero-sum subsequence of length $2^r$. From Theorem \ref{en} we get that $E_{U(2^r)}(2^r)=2^r+r$. So we see that $S$ is an $E$-extremal sequence for $U(2^r)$. It follows that a sequence which is $U(2^r)$-equivalent to $S$ is also an $E$-extremal sequence for $U(2^r)$. 
\end{proof}

The next result is Lemma 1 (ii) of \cite{L}. 

\begin{lem}\label{even}
If a sequence in $\Z_{2^r}$ has a non-zero even number of units, then it is a $U(2^r)$-weighted zero-sum sequence.  
\end{lem}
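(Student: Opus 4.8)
The plan is to construct explicit unit weights $a_1,\ldots,a_k$ for $S=(x_1,\ldots,x_k)$ with $\sum_{i=1}^k a_ix_i=0$, using the fact that the $2t$ units among the terms (with $t\ge 1$) give a pair of ``free'' units able to absorb whatever the other terms contribute. First I would partition $[1,k]$ into three (possibly empty) parts: the positions of the units, the positions of the zero terms, and the positions of the nonzero non-units. A zero term contributes $0$ no matter what weight is attached, so I assign it the weight $1$. Every nonzero non-unit of $\Z_{2^r}$ is even, so if I assign the weight $1$ to each such term, their total contribution $Y_0$ is a sum of even elements, hence even; this evenness is the only property of $Y_0$ I will use.

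Next I would handle the $2t$ units by grouping their positions into $t$ pairs. For each pair of units $x_p,x_q$ other than one distinguished pair, the weights $a_p=x_q$ and $a_q=-x_p$ are units and give $a_px_p+a_qx_q=x_qx_p-x_px_q=0$, so these $t-1$ pairs contribute nothing. For the distinguished pair $x_1,x_2$ I want $a_1x_1+a_2x_2=-Y_0$: here I use the elementary observation that every even element $w$ of $\Z_{2^r}$ equals $1+(w-1)$ with $w-1$ odd, hence a unit; taking $a_2=x_2^{-1}$ and $a_1=(-Y_0-1)x_1^{-1}$ — both units, since $-Y_0-1$ is odd — gives $a_1x_1+a_2x_2=(-Y_0-1)+1=-Y_0$. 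Adding all contributions yields $(-Y_0)+0+Y_0+0=0$, and since $k\ge 2t\ge 2$ the sequence is non-empty, so this exhibits $S$ as a $U(2^r)$-weighted zero-sum sequence.

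There is essentially no hard step: the entire content is the remark that an even element of $\Z_{2^r}$ is $1$ plus an odd element, together with the fact that the parity hypothesis is exactly what is needed — with an odd number of units, each unit-weighted term is odd and the weighted sum is forced to be odd, so no choice of unit weights can make it $0$. The only points requiring care are checking that every weight introduced (in particular $-x_p$, $-Y_0-1$, and the inverses $x_1^{-1},x_2^{-1}$) is a unit of $\Z_{2^r}$ and that $Y_0$ is even; both are immediate since every odd residue modulo $2^r$ is a unit. One could alternatively phrase the cancellation among the remaining units as an induction on their (even) number, but the explicit pairing above is shorter.
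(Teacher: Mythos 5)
Your proof is correct. Note that the paper itself gives no argument for this lemma: it simply cites it as Lemma 1(ii) of Luca's paper \cite{L}, so there is no in-text proof to compare against. Your construction is a clean, self-contained verification: the key points all check out --- the non-units of $\Z_{2^r}$ are exactly the even residues, so the weight-$1$ contribution $Y_0$ of the nonzero non-units is even; each auxiliary pair of units $(x_p,x_q)$ is killed by the unit weights $(x_q,-x_p)$; and the distinguished pair absorbs $-Y_0$ because $-Y_0-1$ is odd, hence a unit, making $(-Y_0-1)x_1^{-1}$ and $x_2^{-1}$ legitimate weights with $a_1x_1+a_2x_2=-Y_0$. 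The hypothesis that the number of units is non-zero and even is used exactly where it must be (you need at least one pair to do the absorbing, and your closing parity remark correctly explains why an odd number of units would make the conclusion false). The one presentational quibble is that ``$x_1,x_2$'' should be understood as the first two \emph{unit} terms rather than literally the first two terms of $S$, but that is clear from context. Your argument would serve as a perfectly good replacement for the external citation.
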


\begin{thm}
If a sequence in $\Z_{2^r}$ is an $E$-extremal sequence for $U(2^r)$, then it is $U(2^r)$-equivalent to a sequence in which $2^i$ occurs exactly once for each $i\in [0,r-2]$ and there is an odd number $m\in [1,2^r-1]$ such that $2^{r-1}$ occurs exactly $m$ times and the remaining terms are zero. 
\end{thm}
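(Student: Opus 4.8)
The plan is to take an $E$-extremal sequence $S$ for $U(2^r)$, so $S$ has length $2^r + r - 1$ and has no $U(2^r)$-weighted zero-sum subsequence of length $2^r$, and to extract the claimed normal form by analyzing $S$ coordinate-by-coordinate with respect to the $2$-adic valuation. Since every element of $\Z_{2^r}$ is a unit times some $2^i$ with $i\in[0,r-1]$ (together with the element $0$), up to $U(2^r)$-equivalence I may assume each term of $S$ is one of $0,1,2,\ldots,2^{r-1}$. Write $c_i$ for the number of times $2^i$ occurs and $c_\infty$ for the number of zeros, so $\sum_{i=0}^{r-1} c_i + c_\infty = 2^r + r - 1$.

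The first key step is to show that $c_i$ is \emph{odd} for every $i\in[0,r-1]$. Suppose some $c_i$ is even and positive; I would argue that one can then build a $U(2^r)$-weighted zero-sum subsequence of length exactly $2^r$, contradicting extremality. The idea is that $2^i$ times an even number of units is a $U(2^r)$-weighted zero-sum (this is the content of Lemma \ref{even} applied after dividing out $2^i$, i.e.\ reducing mod $2^{r-i}$ and lifting via Observation \ref{obs}); combined with padding by zeros and by pairs of equal terms $x+x$ only when available, one adjusts the length up to $2^r$. The delicate point here is the bookkeeping of lengths: one must check that there are enough zeros (or enough repeated terms) to pad a short zero-sum up to length $2^r$, and this is where the precise value $|S| = 2^r + r - 1$ from Theorem \ref{en} gets used. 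Once every $c_i$ is forced to be odd (in particular nonzero), and since $\sum c_i \le$ (total length), a counting argument pins down that $c_i = 1$ for $i\in[0,r-2]$: if two of the $c_i$ with $i\le r-2$ were $\ge 3$, or one were $\ge 3$ while also $c_{r-1}$ is large, the lengths would not add up, or one could again form a length-$2^r$ weighted zero-sum by the same padding trick. This leaves $c_{r-1} = m$ odd with $m \in [1, 2^r - 1]$ forced by the length identity $\;(r-1)\cdot 1 + m + c_\infty = 2^r + r - 1$, i.e.\ $m + c_\infty = 2^r$, so $m\le 2^r-1$ (as $c_\infty\ge 1$, since if $c_\infty=0$ then $m=2^r$ is even, contradiction) and $m\ge 1$.

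The main obstacle I anticipate is the length-accounting in the "some $c_i$ even" case: one has to show that whenever a proper subsequence of $S$ can be made into a weighted zero-sum, it can be inflated to length precisely $2^r$ using the remaining terms (zeros and the odd-count surplus), and this requires knowing that the "leftover" part of $S$ is long enough and flexible enough. I would handle this by the same dichotomy used in the proof of Theorem \ref{e2a}: separate the terms occurring an even number of times (which contribute zero-sum blocks of arbitrary even length) from a distilled subsequence of distinct representatives, bound the distilled part's length by the structure of $\Z_{2^r}$, and then match parities. Assuming the earlier results, especially Lemma \ref{even}, Observation \ref{obs}, Theorem \ref{en}, and the argument scheme of Theorem \ref{e2a}, the remaining steps are routine parity and counting manipulations.
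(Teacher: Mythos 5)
Your overall strategy---normalize up to $U(2^r)$-equivalence so that every term is $0$ or a power of $2$, use Lemma \ref{even} to manufacture weighted zero-sum subsequences of length $2^r$, and finish with a parity count---is the same one the paper uses, but the intermediate claims you propose do not combine correctly, and the step you yourself flag as delicate is exactly where the argument breaks. First, ruling out ``$c_i$ even and positive'' does not yield ``$c_i$ odd, in particular nonzero'': it leaves $c_i=0$ entirely open, and the fact that $c_i\geq 1$ for every $i\in[0,r-2]$ needs its own argument. (If some such $c_i$ were $0$, then at least $2^r+1$ terms of $S$ would be either $0$ or a unit multiple of $2^{r-1}$; dropping one term of a suitable type leaves $2^r$ of them containing an even number of unit multiples of $2^{r-1}$, which by Lemma \ref{even} and Lemma \ref{b} is a $U(2^r)$-weighted zero-sum subsequence of length $2^r$, a contradiction.) Second, the ``counting argument'' you invoke to force $c_i=1$ for $i\le r-2$ cannot be a length computation: $c_0=3$ together with correspondingly fewer zeros is perfectly consistent with $\sum_i c_i+c_\infty=2^r+r-1$, so the lengths \emph{do} add up. What actually rules out $c_i\geq 2$ (odd or even) is again a zero-sum construction, and to carry it out one needs to know already that $c_j\le 1$ for all $j<i$: that hypothesis guarantees that at most $i$ terms of $S$ fail to be divisible by $2^i$, hence that one can choose a subsequence of length exactly $2^r\le (2^r+r-1)-i-1$ consisting of terms divisible by $2^i$ and containing a positive even number of unit multiples of $2^i$; by Lemma \ref{b} and Lemma \ref{even} this is a $U(2^r)$-weighted zero-sum subsequence of length $2^r$. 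So the statement to prove by induction on $i\in[0,r-2]$ is ``$S$ has at most one term which is a unit multiple of $2^i$'', not oddness of the $c_i$.

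Once that induction and the nonvanishing step are in place, your concluding paragraph is essentially correct and matches the paper: exactly $r-1$ terms are unit multiples of $2^0,\ldots,2^{r-2}$, the remaining $2^r$ terms are $0$ or unit multiples of $2^{r-1}$, the number $m$ of the latter must be odd (otherwise those $2^r$ terms themselves form a weighted zero-sum subsequence of length $2^r$), and $m\le 2^r-1$ since $m=2^r$ would be even. As written, though, the proposal has a genuine gap in the two places indicated above, and the repair is precisely the inductive ``at most one unit multiple of $2^i$'' argument together with the separate argument that none of these counts can vanish.
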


\begin{proof}
Let $n=2^r$ and $S$ be a sequence in $\Z_n$ which is an $E$-extremal sequence for $U(n)$. By Theorem \ref{en} we have that $E_{U(n)}(n)=n+r$. So any $E$-extremal sequence for $U(n)$ has length $n+r-1$. Suppose $S$ has at least two units. By Lemma \ref{even} we see that $S$ has a zero-sum subsequence of length $t$ for any even $t$ which is at most $k-1$. We may assume that $r\geq 2$ and so we get that $k-1=n+r-2\geq n$. Thus, we get the contradiction that $S$ has a $U(n)$-weighted zero-sum subsequence of length $n$. Hence, we see that $S$ has at most one unit.

Let $s\leq r-2$. Suppose for each $i\in [0,s-1]$, the sequence $S$ has at most one term which is a unit multiple of $2^i$. We claim that $S$ has at most one term which is a unit multiple of $2^s$. By our assumption, we see that $S$ has at least $k-s$ terms which are divisible by $2^s$. If our claim is not true, we can find a subsequence of $S$ having length $k-s-1$ which has an even number of terms which are a unit multiple of $2^s$. 

So given any even number $t$ which is at most $k-s-1$, by Lemma \ref{even} we see that $S$ has a $U(n)$-weighted zero-sum subsequence having length $t$. As $n=k-(r-1)$ and $s\leq r-2$, it follows that $n\leq k-1-s$. Thus, we get the contradiction that $S$ has a $U(n)$-weighted zero-sum subsequence of length $n$. Hence, our claim must be true. Thus, we see by induction that for each $i\in [0,r-2]$ the sequence $S$ can have at most one term which is a unit multiple of $2^i$.

We now claim that for each $i\in [0,r-2]$ the sequence $S$ has exactly one term which is a unit multiple of $2^i$. If not, there are at most $r-2$ such terms and so $S$ will have at least $k-(r-2)=n+1$ terms which are either zero or a unit multiple of $2^{r-1}$. We can find a subsequence of $S$ having length $n$ which has an even number of terms which are a unit multiple of $2^{r-1}$. So by Lemma \ref{even} we get the contradiction that $S$ has a $U(n)$-weighted zero-sum subsequence of length $n$. Hence, we see that our claim is true.

Thus, we see that $S$ has $k-(r-1)=n$ terms which are either zero or a unit multiple of $2^{r-1}$. By Lemma \ref{even} we see that the number of terms of $S$ which are a unit multiple of $2^{r-1}$ must be odd. Thus, $S$ is $U(n)$-equivalent to a sequence in which $2^i$ occurs exactly once for each $i\in [0,r-2]$ and there is an odd number $m\in [1,n-1]$ such that $2^{r-1}$ occurs exactly $m$ times and the remaining terms are zero. 
\end{proof}

For example, a sequence in $\Z_8$ is an $E$-extremal sequence for $U(8)$ if and only if it is $U(8)$-equivalent to one of the following sequences: 
$$(\,1,\,2,\,4,\,0,\,0,\,0,\,0,\,0,\,0,\,0\,),~(\,1,\,2,\,4,\,4,\,4,\,0,\,0,\,0,\,0,\,0\,),$$
$$(\,1,\,2,\,4,\,4,\,4,\,4,\,4,\,0,\,0,\,0\,),~(\,1,\,2,\,4,\,4,\,4,\,4,\,4,\,4,\,4,\,0\,).$$ 

We will now characterize the $C$-extremal sequences for $U(2^r)$. The following result is Lemma 5 in \cite{SKS}.

\begin{lem}\label{b}
Let $S$ be a sequence in $\Z_n$ and $p$ be a prime divisor of $n$ such that every term of $S$ is divisible by $p$. Suppose $n'=n/p$ and $S'$ is the sequence in $\Z_{n'}$ whose terms are obtained by dividing the terms of $S$ by $p$ and taking their images under $f_{n,n'}$. If $S'$ is a $U(n')$-weighted zero-sum sequence, then $S$ is a $U(n)$-weighted zero-sum sequence.  
\end{lem}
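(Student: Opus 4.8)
The plan is to lift a $U(n')$-weighted zero-sum relation for $S'$ to a $U(n)$-weighted zero-sum relation for $S$; the only input beyond bookkeeping is that the reduction map $f_{n,n'}$ carries $U(n)$ onto $U(n')$.

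First I would fix notation: write $S=(x_1,\ldots,x_k)$, and, since $p$ divides every term of $S$, choose for each $i$ an element $z_i\in\Z_n$ with $x_i=p\,z_i$ such that $f_{n,n'}(z_i)$ is the $i$-th term of $S'$. (Any two elements $z_i$ with $p\,z_i=x_i$ differ by a multiple of $n'=n/p$, so they have the same image under $f_{n,n'}$, which is why $S'$ is well defined.) Since $S'$ is a $U(n')$-weighted zero-sum sequence, there are units $a_1,\ldots,a_k\in U(n')$ with $\sum_{i=1}^{k} a_i\,f_{n,n'}(z_i)=0$ in $\Z_{n'}$.

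Next I would invoke the surjectivity of $f_{n,n'}\colon U(n)\to U(n')$. (This is standard: by the Chinese Remainder Theorem this map is, up to dropping the factor at $p$ when $v_p(n)=1$, a product of reductions $\Z_{q^{a}}^{*}\to\Z_{q^{b}}^{*}$ with $a\ge b\ge 1$, each of which is onto; alternatively one lifts each $a_i$ directly, adding a suitable multiple of $n'$ to a chosen integer representative to make it coprime to $n$.) Choose $b_i\in U(n)$ with $f_{n,n'}(b_i)=a_i$ for each $i$. Then $f_{n,n'}\big(\sum_{i=1}^{k} b_i\,z_i\big)=\sum_{i=1}^{k} a_i\,f_{n,n'}(z_i)=0$, so $\sum_{i=1}^{k} b_i\,z_i$ lies in $\ker f_{n,n'}$, which is the subgroup $n'\Z_n$ of $\Z_n$ of order $p$. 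Multiplying through by $p$ annihilates this subgroup since $p\,n'=n$, and therefore $\sum_{i=1}^{k} b_i\,x_i=p\sum_{i=1}^{k} b_i\,z_i=0$ in $\Z_n$. As every $b_i$ is a unit, this exhibits $S$ (with index set $[1,k]$) as a $U(n)$-weighted zero-sum sequence. Once the surjectivity of $U(n)\to U(n')$ and the identification $\ker f_{n,n'}=n'\Z_n$ are noted, the rest is a one-line computation, so I do not anticipate a genuine obstacle here.
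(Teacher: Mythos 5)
Your proof is correct and complete. Note that the paper does not prove this lemma itself --- it cites it as Lemma 5 of \cite{SKS} --- but your argument (check that $S'$ is well defined, lift the weights through the surjection $U(n)\to U(n')$, and observe that $\ker f_{n,n'}=n'\Z_n$ is annihilated by multiplication by $p$) is the standard one and matches how such reductions are handled elsewhere in the paper, e.g.\ via Lemma 7 of \cite{SKS2}; I see no gap.
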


\begin{thm}
Let $n=2^r$, $n'=n/2$ and $S=(x_1,\ldots,x_{n-1})$ be a sequence in $\Z_n$. Suppose $S_1=(x_1,\ldots,x_{n'-1})$ and $S_2=(x_{n'+1},\ldots,x_{n-1})$. Then $S$ is a $C$-extremal sequence for $U(n)$ if and only if all the terms of $S$ are even except the `middle' term $x_{n'}$ and $S_1',\,S_2'$ are $C$-extremal sequences for $U(n')$ in $\Z_{n'}$ where  $S_1'$ and $S_2'$ denote the sequences in $\Z_{n'}$ which are obtained by dividing the terms of $S_1$ and $S_2$ by two and then taking their images under $f_{n,n'}$.
\end{thm}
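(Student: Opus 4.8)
The plan is to reduce everything to the following elementary description of $U(2^r)$-weighted zero-sum sequences, which is implicit in the lemmas above: a nonempty sequence $T$ in $\Z_{2^r}$ is $U(2^r)$-weighted zero-sum if and only if either it contains a nonzero even number of units (this is Lemma \ref{even}, together with the observation that reducing a vanishing $U(2^r)$-weighted sum modulo $2$ forces the number of units involved to be even), or it contains no unit and the sequence $T'$ obtained by halving every term and applying $f_{n,n'}$ is $U(2^{r-1})$-weighted zero-sum (this is Lemma \ref{b} and its converse; the converse is the one-line check that for even $x_i$ and odd $a_i$ one has $\sum a_i x_i\equiv 0\pmod{2^r}$ if and only if $\sum a_i(x_i/2)\equiv 0\pmod{2^{r-1}}$, the reduced weights $a_i$ remaining units). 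Write $n=2^r$ and $n'=2^{r-1}$; I will use throughout that $C_{U(n)}(n)=n$ and $C_{U(n')}(n')=n'$ by Theorem \ref{cun}, so that $S$ has the forced length $n-1=(n'-1)+1+(n'-1)$ and $S_1',S_2'$ have length $n'-1$.

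For the ``if'' direction I would assume $S$ has the asserted shape and take a hypothetical $U(n)$-weighted zero-sum consecutive subsequence $T=(x_i,\ldots,x_j)$ of $S$. If $T$ straddles the centre, i.e. $i\le n'\le j$, then $T$ contains the unique unit $x_{n'}$ of $S$ and no other unit, hence has exactly one unit and so cannot be $U(n)$-weighted zero-sum. Otherwise $T$ lies strictly on one side of the centre, so $T$ is a consecutive subsequence of $S_1$ or of $S_2$ with all terms even; by the converse half of Lemma \ref{b} the corresponding consecutive subsequence $T'$ of $S_1'$ or of $S_2'$ is $U(n')$-weighted zero-sum, contradicting $C$-extremality there. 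Since these cases are exhaustive and $S$ has length $n-1=C_{U(n)}(n)-1$, it follows that $S$ is $C$-extremal for $U(n)$.

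For the ``only if'' direction I would start from a $C$-extremal $S$ and extract the structure in three steps. First, $S$ has exactly one unit: it has at least one, for otherwise all terms are even and halving them gives a length-$(n-1)$ sequence in $\Z_{n'}$ with $n-1\ge n'=C_{U(n')}(n')$, hence with a $U(n')$-weighted zero-sum consecutive subsequence, which lifts via Lemma \ref{b} to one of $S$; and it has at most one, for two units would make the stretch of $S$ from the first unit to the second a consecutive subsequence containing exactly two units, $U(n)$-weighted zero-sum by Lemma \ref{even}. Second, the unit sits at position $n'$: if it is $x_p$, then $(x_1,\ldots,x_{p-1})$ is an all-even block, so $p-1<n'$ (otherwise it lifts a weighted zero-sum as before), giving $p\le n'$, and symmetrically $(x_{p+1},\ldots,x_{n-1})$ has length $n-1-p<n'$, giving $p\ge n'$. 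Third, $S_1'$ and $S_2'$ are $C$-extremal for $U(n')$: each has length $n'-1=C_{U(n')}(n')-1$, and a $U(n')$-weighted zero-sum consecutive subsequence of either would lift via Lemma \ref{b} to one of $S$, which is impossible.

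The step I expect to carry the argument is the second one in the ``only if'' direction, where the numerology $C_{U(n')}(n')=\tfrac12 C_{U(n)}(n)$ from Theorem \ref{cun} is used in the decisive way: a lone unit placed off-centre always leaves an all-even block of length at least $n'$ on one side, which is already long enough to be forced to carry a $U(n')$-weighted zero-sum, and hence a $U(n)$-weighted zero-sum, contradicting $C$-extremality. Everything else is bookkeeping once the description of $U(2^r)$-weighted zero-sum sequences in the first paragraph is in place; in particular the converse of Lemma \ref{b} used there is short enough that it could instead be recorded as a small preparatory lemma.
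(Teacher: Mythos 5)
Your proposal is correct and follows essentially the same route as the paper: Lemma \ref{even} forces at most one odd term, the count $C_{U(n')}(n')=n'$ from Theorem \ref{cun} forces that odd term into the middle position, and Lemma \ref{b} transfers zero-sum subsequences between $\Z_n$ and $\Z_{n'}$ to handle the two halves. The only difference is that you write out the ``if'' direction (via the easy converse of Lemma \ref{b} and the mod-$2$ parity observation), which the paper omits by reference to Theorem 5 of \cite{SKS2}; your version of that step is sound.
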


\begin{proof}
Let $n=2^r$, $n'=n/2$ and $S=(x_1,\ldots,x_{n-1})$ be a $C$-extremal sequence for $U(n)$. Suppose $S$ has at least two odd terms. We can find a subsequence $T$ of consecutive terms of $S$ which has exactly two odd terms. By Lemma \ref{even} we get the contradiction that $T$ is a $U(n)$-weighted zero-sum sequence. Thus, we see that at most one term of $S$ can be odd.

Suppose the `middle' term $x_{n'}$ is even. As at most one term of $S$ is odd, we can find a subsequence $T$ of consecutive terms of $S$ of length $n'$ all of whose terms are even. Let $T'$ be the sequence in $\Z_{n'}$ whose terms are obtained by dividing the terms of $T$ by two and then taking their images under $f_{n,n'}$. As $n'=2^{r-1}$ by Theorem \ref{cun} we have that $C_{U(n')}(n')=n'$. Hence, as $T'$ has length $n'$ we see that $T'$ has an $U(n')$-weighted zero-sum subsequence of consecutive terms. So by Lemma \ref{b} we get the contradiction that $T$ (and hence $S$) has a $U(n)$-weighted zero-sum subsequence of consecutive terms.

Thus, we see that $S$ has a unique odd term which is $x_{n'}$. For $i=1,2$ consider the sequences $S_i$ and $S_i'$ which are as defined in the statement of the theorem. Suppose $S_1'$ has a $U(n')$-weighted zero-sum subsequence of consecutive terms. By Lemma \ref{b} we get the contradiction that $S_1$ (and hence $S$) has a $U(n)$-weighted zero-sum subsequence of consecutive terms. As $S_1'$ has length $n'-1$, it follows that $S_1'$ is a $C$-extremal sequence for $U(n')$ in $\Z_{n'}$. A similar argument shows that $S_2'$ is also a $C$-extremal sequence for $U(n')$ in $\Z_{n'}$.

The proof of the converse part is similar to the proof of Theorem 5 of \cite{SKS2} and so we will omit it.
\end{proof}

For example, a sequence $S$ in $\Z_8$ is a $C$-extremal sequence for $U(8)$ if and only if  there exist units $a_1,\ldots,a_7\in \Z_8$ such that $$S=(4a_1,2a_2,4a_3,a_4,4a_5,2a_6,4a_7).$$

\section{Some other weight-sets}

For a real number $x$, we denote the smallest integer which is greater than or equal to $x$ by $\lceil x\rceil$. When $A=\{1,2,\ldots,r\}$ where $r<n$, it is shown in Theorem 3 (i) of \cite{X} that $D_A(n)=\lceil n/r \rceil$. As we have that $C_A(n)\geq D_A(n)$, it follows that $C_A(n)\geq \lceil n/r \rceil$. 

\begin{thm}
Let $A=\{1,2,\ldots,r\}$ where $r<n$. Then we get $C_A(n)=\lceil n/r \rceil$. 
\end{thm}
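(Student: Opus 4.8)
The lower bound $C_A(n)\ge\lceil n/r\rceil$ was noted just before the statement, so the plan is to prove the matching upper bound: every sequence in $\Z_n$ of length $m=\lceil n/r\rceil$ has an $A$-weighted zero-sum subsequence of consecutive terms. Write $Ax=\{x,2x,\ldots,rx\}$ for $x\in\Z_n$. Given $S=(x_1,\ldots,x_m)$, I would argue by contradiction, assuming $S$ has no such subsequence, and track the ``reachable values'' by setting $W_0=\{0\}$ and $W_j=\{0\}\cup(W_{j-1}+Ax_j)$ for $j\in[1,m]$. A short induction on $j$ shows that every element of $W_{j-1}+Ax_j$ equals $\sum_{t=i}^{j}a_tx_t$ for some $i\in[1,j]$ and some $a_t\in A$, i.e.\ it is the value of an $A$-weighted sum over a block of consecutive terms of $S$ ending at position $j$. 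Hence the assumption forces $0\notin W_{j-1}+Ax_j$ for every $j\in[1,m]$. Since $0\in W_{j-1}$, this also rules out $\mathrm{ord}(x_j)\le r$ (otherwise $0=(\mathrm{ord}(x_j))x_j\in Ax_j\subseteq W_{j-1}+Ax_j$), so $\mathrm{ord}(x_j)>r$ for every $j$.

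The heart of the argument is the growth estimate $|W_j|\ge|W_{j-1}|+r$ for each $j\in[1,m]$. Fixing $j$, I would put $\delta=x_j$, $H=\langle\delta\rangle$, and $t=|H|=\mathrm{ord}(\delta)>r$. Since $Ax_j\subseteq H$, adding $Ax_j$ does not move between cosets of $H$, so $W_{j-1}+Ax_j$ meets each coset $c+H$ in exactly $(W_{j-1}\cap(c+H))+Ax_j$; translating by $\delta$ shows that on every coset other than $H$ this set is at least as large as $W_{j-1}\cap(c+H)$. On $H$ itself, identify $H$ with $\Z_t$ via $k\delta\mapsto k$, and let $V\subseteq\Z_t$ be the image of $W_{j-1}\cap H$; then $0\in V$, and since each $-ax_j$ lies in $H$ but not in $W_{j-1}$ (as $0\notin W_{j-1}+Ax_j$), we get $V\cap\{-1,\ldots,-r\}=\emptyset$, i.e.\ $V\subseteq[0,\,t-r-1]$. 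Writing $v^\ast=\max V$, the length-$r$ arc $\{v^\ast+1,\ldots,v^\ast+r\}$ does not wrap around and meets $V+1$ only in $\{v^\ast+1\}$; since both $V+1$ and that arc lie inside $V+\{1,\ldots,r\}$, this gives $|V+\{1,\ldots,r\}|\ge|V|+r-1$. Summing the coset contributions yields $|W_{j-1}+Ax_j|\ge|W_{j-1}|+r-1$, and as $0\notin W_{j-1}+Ax_j$ we conclude $|W_j|=1+|W_{j-1}+Ax_j|\ge|W_{j-1}|+r$.

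Iterating from $|W_0|=1$ gives $|W_m|\ge 1+mr$, while $W_m\subseteq\Z_n$ forces $1+mr\le n$; but $m=\lceil n/r\rceil$ satisfies $mr\ge n$, a contradiction. Hence $S$ does have an $A$-weighted zero-sum subsequence of consecutive terms, so $C_A(n)\le\lceil n/r\rceil$, and combined with the lower bound, $C_A(n)=\lceil n/r\rceil$. I expect the main obstacle to be the growth estimate: the naive bounds (translating $W_{j-1}$ by a single $a\delta$, or adjoining $Ax_j$ to $W_{j-1}$) only yield $|W_j|\ge|W_{j-1}|+1$, which is far too weak to reach $mr\ge n$. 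The coset decomposition is what rescues it — the full ``$+r$'' is extracted on the coset $H$, using that $V$ is trapped in an interval of length $t-r$ so that a maximal element of $V$ contributes a fresh length-$r$ arc, while the other cosets only have to avoid shrinking. A secondary point is that working with $H=\langle x_j\rangle$ (rather than trying to invert $x_j$) makes the argument insensitive to whether individual $x_j$ are units in $\Z_n$.
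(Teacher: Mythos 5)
Your proof is correct, but it takes a genuinely different route from the paper's. The paper proves the upper bound by a two-line reduction: it replaces $S=(x_1,\ldots,x_m)$ by the blown-up sequence $S'$ in which each $x_i$ is repeated $r$ times consecutively, notes that $S'$ has length $mr\ge n$ so that $C_{\{1\}}(n)\le n$ (Theorem 1 of \cite{SKS}, the classical pigeonhole on partial sums) yields a zero-sum block of consecutive terms of $S'$, and observes that such a block uses each original $x_i$ with a multiplicity in $[1,r]=A$ over a consecutive range of indices. Your argument instead runs a direct additive-combinatorics induction on the sets $W_j$ of reachable block sums ending at position $j$, extracting the growth $|W_j|\ge|W_{j-1}|+r$ by decomposing along cosets of $\langle x_j\rangle$ and exploiting that the trace $V$ of $W_{j-1}$ on that subgroup is confined to an interval of length $t-r$; I checked the details (the non-wrapping of the arc at $\max V$, the single-point overlap with $V+1$, and the extra $+1$ from $0\notin W_{j-1}+Ax_j$) and they all hold. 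What the paper's approach buys is brevity and a reduction to a known constant; what yours buys is self-containment --- it does not invoke $C_{\{1\}}(n)\le n$ and in fact reproves it as the case $r=1$ --- together with a quantitative growth lemma that is insensitive to whether the $x_j$ are units and could be reusable elsewhere. The one thing to keep in mind if you write it up is to state explicitly that $W_j$ is not claimed to contain $W_{j-1}$; your argument never needs that monotonicity, only the cardinality inequality.
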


\begin{proof}
It suffices to show that $C_A(n)\leq\lceil n/r \rceil$. Let $S=(x_1,\ldots,x_m)$ be a sequence in $\Z_n$ of length $m=\lceil n/r \rceil$. Consider the sequence $$S'=(\,\overbrace{x_1,\,\ldots,\,x_1}^\text{$r$ times},\,\overbrace{x_2,\,\ldots,\,x_2}^\text{$r$ times},\,\ldots,\,\overbrace{x_m,\,\ldots,\,x_m}^\text{$r$ times}\,).$$  
We observe that the length of $S'$ is $mr$ which is at least $n$. By Theorem 1 of \cite{SKS} we have $C_{\{1\}}(n)\leq n$ and so we see that $S'$ has a zero-sum subsequence of consecutive terms. Thus, we obtain an $A$-weighted zero-sum subsequence of $S$ having consecutive terms. Hence, it follows that $C_A(n)\leq\lceil n/r \rceil$. \end{proof}

%Notice that the coefficients of all the terms in the zero-sum which was obtained in the previous proof are $r$, except possibly the coefficients of the first and last terms of $T$.  

\begin{thm}\label{odd}
 Let $n$ be even, $v_2(n)=r$ and $m=n/2^r$. Let $B$ denote the set of all odd elements of $\Z_n$ and $\{m\}\su A\su B$. Then we have that $C_A(n)=2^r$. 
\end{thm}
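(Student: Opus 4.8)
The plan is to reduce the whole statement to facts about $\Z_{2^r}$ via the natural map $f:=f_{n,2^r}\colon\Z_n\to\Z_{2^r}$ (so that for a sequence $S$ in $\Z_n$, $S^{(2)}=f(S)$ in the notation introduced before Observation~\ref{obs}). Two structural observations drive everything: first, $\ker f$ is the cyclic subgroup $\langle 2^r\rangle$ of $\Z_n$, which has order $m$, so $m\,y=0$ in $\Z_n$ for every $y\in\ker f$; second, an element of $\Z_n$ is odd precisely when its image under $f$ is an odd element of $\Z_{2^r}$, i.e. a unit of $\Z_{2^r}$.

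For the upper bound $C_A(n)\le 2^r$, I would take an arbitrary sequence $S=(x_1,\dots,x_{2^r})$ in $\Z_n$. Its image $S^{(2)}$ has length $2^r=|\Z_{2^r}|$, so by Theorem~1 of \cite{SKS} (equivalently Theorem~\ref{cab}) it has a zero-sum subsequence of consecutive terms, on an index set $[i,j]$ say. Then $\sum_{k=i}^{j}x_k\in\ker f$, hence $\sum_{k=i}^{j}m\,x_k=0$ in $\Z_n$; since $m\in A$, the block $(x_i,\dots,x_j)$ with every weight equal to $m$ is an $A$-weighted zero-sum subsequence of consecutive terms of $S$.

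For the lower bound $C_A(n)\ge 2^r$, I would invoke Theorem~\ref{cun} with $n$ replaced by $2^r$ to get $C_{U(2^r)}(2^r)=2^{\Omega(2^r)}=2^r$, so there is a sequence $\overline S$ of length $2^r-1$ in $\Z_{2^r}$ with no $U(2^r)$-weighted zero-sum subsequence of consecutive terms. Lifting $\overline S$ term by term to a sequence $S$ in $\Z_n$ (choosing any preimages under $f$), suppose $S$ had an $A$-weighted zero-sum subsequence of consecutive terms, say $\sum_{k\in I}a_kx_k=0$ with $I=[i,j]$ non-empty and $a_k\in A\subseteq B$. Applying the ring homomorphism $f$ gives $\sum_{k\in I}f(a_k)\,\overline x_k=0$ in $\Z_{2^r}$, and each $a_k$ is odd so $f(a_k)\in U(2^r)$; this exhibits a $U(2^r)$-weighted zero-sum subsequence of consecutive terms of $\overline S$, a contradiction. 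Hence $S$ witnesses $C_A(n)\ge 2^r$, and combining the two bounds gives $C_A(n)=2^r$.

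I do not expect a genuine obstacle: the argument is short once this reduction is set up, and the two halves simply exploit the two ends of the hypothesis $\{m\}\subseteq A\subseteq B$ — membership of $m$ supplies the weight that annihilates $\ker f$, turning the upper bound into an ordinary unweighted zero-sum problem in $\Z_{2^r}$ (solved by the pigeonhole principle on partial sums), while $A\subseteq B$ forces every weight to reduce mod $2^r$ to a unit, turning the lower bound into Theorem~\ref{cun} for $2^r$. The only point worth a second look is the degenerate case $r=1$, where $\Z_{2^r}=\Z_2$ and one may simply take $\overline S=(1)$.
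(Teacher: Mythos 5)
Your proposal is correct and follows essentially the same route as the paper: the upper bound via the image in $\Z_{2^r}$, Theorem 1 of \cite{SKS}, and the weight $m$ annihilating $\ker f_{n,2^r}$; the lower bound by lifting a sequence in $\Z_{2^r}$ with no $U(2^r)$-weighted zero-sum consecutive subsequence and using that odd weights map to units of $\Z_{2^r}$. The only cosmetic difference is that you cite Theorem~\ref{cun} for the existence of that extremal sequence where the paper cites Corollary 2 of \cite{SKS} directly.
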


\begin{proof}
 Let $S=(x_1,\ldots,x_k)$ be a sequence in $\Z_n$ having length $k=2^r$. Let $r=v_2(n)$, $m=n/2^r$ and $S'$ denote the image of the sequence $S$ under the map $f_{n,2^r}$. By Theorem 1 of \cite{SKS} we have that $C_{\{1\}}(2^r)\leq 2^r$. So there is a subsequence $T$ of $S$ having consecutive terms such that the image $T'$ of $T$ under $f_{n,2^r}$ is a zero-sum subsequence. It follows that the sum of the terms of $T$ is divisible by $2^r$. So we see that $T$ is an $\{m\}$-weighted zero-sum sequence. Thus, we see that $C_{\{m\}}(n)\leq 2^r$. 
 
 By Corollary 2 of \cite{SKS} we have that $C_{U(2^r)}(2^r)\geq 2^r$. So there is a sequence $S'$ of length $2^r-1$ in $\Z_{2^r}$ which has no $U(2^r)$-weighted zero-sum subsequence of consecutive terms. As the map $f_{n,2^r}$ is onto, we can find a sequence $S$ in $\Z_n$ whose image under $f_{n,2^r}$ is $S'$. Suppose $S$ has a $B$-weighted zero-sum subsequence having consecutive terms. As $n$ is even, we see that the image of $B$ under $f_{n,2^r}$ is contained in $U(2^r)$. So we get the contradiction that $S'$ has a $U(2^r)$-weighted zero-sum subsequence having consecutive terms. Thus, we see that $C_{B}(n)\geq 2^r$. 
 
 As we have that $\{m\}\su A\su B$, it follows that $C_B(n)\leq C_A(n)\leq C_{\{m\}}(n)$ and so we get that $C_A(n)=2^r$. 
 \end{proof}

Our next result generalizes a result of \cite{X}. We follow a similar argument as in Theorem \ref{odd}.
 
\begin{thm}
Let $n$ be even, $v_2(n)=r$ and $m=n/2^r$. Let $B$ denote the set of all odd elements of $\Z_n$ and $\{m\}\su A\su B$. Then we have that $D_A(n)=r+1$. 
\end{thm}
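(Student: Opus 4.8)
\nob The plan is to follow the template of the proof of Theorem~\ref{odd}, using the value $D_{U(2^r)}(2^r)=\Omega(2^r)+1=r+1$ from Theorem~\ref{dun} wherever that proof used $C_{U(2^r)}(2^r)=2^r$. Since $\{m\}\su A\su B$, every $A$-weighted zero-sum subsequence is also $B$-weighted and every $\{m\}$-weighted one is $A$-weighted, so $D_B(n)\le D_A(n)\le D_{\{m\}}(n)$. Thus it is enough to prove the lower bound $D_B(n)\ge r+1$ and the upper bound $D_A(n)\le r+1$.

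\nob For the lower bound I would take a $D$-extremal sequence for $U(2^r)$ in $\Z_{2^r}$; by Theorem~\ref{dun} it has length $D_{U(2^r)}(2^r)-1=r$, and concretely one may use $(1,2,2^2,\ldots,2^{r-1})$. As $f_{n,2^r}$ is onto, lift it to a sequence $S$ of length $r$ in $\Z_n$. If $S$ had a $B$-weighted zero-sum subsequence then, since $n$ is even the image of $B$ under $f_{n,2^r}$ is contained in $U(2^r)$, and so applying $f_{n,2^r}$ would produce a $U(2^r)$-weighted zero-sum subsequence of the extremal sequence, a contradiction. Hence $D_B(n)\ge r+1$, and therefore $D_A(n)\ge r+1$.

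\nob For the upper bound, let $S=(x_1,\ldots,x_{r+1})$ be a sequence in $\Z_n$ and consider its image $S^{(2)}$ in $\Z_{2^r}$, which has length $r+1=D_{U(2^r)}(2^r)$ and hence admits a $U(2^r)$-weighted zero-sum subsequence, say on an index set $I$, with unit weights $u_i\in U(2^r)$. Writing $n=2^rm$ with $m$ odd, I would lift this relation to $\Z_n$ using the Chinese Remainder Theorem: for each $i\in I$ pick a weight $a_i\in\Z_n$ that is odd, is divisible by $m$, and satisfies $a_i\equiv u_i\pmod{2^r}$ (this is possible because $m$ is a unit modulo $2^r$). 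Then $\sum_{i\in I}a_ix_i$ is divisible by $m$ and is congruent modulo $2^r$ to $\sum_{i\in I}u_i\,f_{n,2^r}(x_i)=0$, so it equals $0$ in $\Z_n$; thus $S$ has a weighted zero-sum subsequence with the desired weights.

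\nob The step I expect to be the crux is this last weight-lifting: the construction above produces weights in $B$, and to obtain $D_A(n)\le r+1$ for the whole range $\{m\}\su A\su B$ one must verify that the weights can in fact be chosen inside $A$. This is where the hypotheses $m\in A$ and $A\su B$ have to be used with care, for instance by arranging each $a_i$ to be a suitable odd multiple of $m$, and it is the part of the argument I would work out most carefully. Once both bounds are in place, the chain $D_B(n)\le D_A(n)\le D_{\{m\}}(n)$ together with $D_B(n)\ge r+1$ and $D_A(n)\le r+1$ gives $D_A(n)=r+1$.
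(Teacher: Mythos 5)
Your argument coincides in structure with the paper's: the lower bound via lifting a $D$-extremal sequence for $U(2^r)$ and observing that $f_{n,2^r}(B)\su U(2^r)$ is exactly what the paper does, and your CRT weight-lifting for the upper bound is a slightly more explicit version of the paper's step of lifting the units $u_i\in U(2^r)$ to units $a_i\in U(n)$ and multiplying the resulting congruence by $m$. Both versions correctly yield $D_B(n)\le r+1$, and more generally $D_A(n)\le r+1$ whenever $A$ contains the whole set $mU(n)$ of odd multiples of $m$ arising as weights.

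The step you flagged as the crux is, however, a genuine gap, and it cannot be closed: the construction produces weights $ma_i$ with $a_i\in U(n)$, and these cannot in general be collapsed to the single weight $m$. Indeed $\sum_{i\in I}mx_i=0$ in $\Z_n$ is equivalent to $\sum_{i\in I}x_i\equiv 0\pmod{2^r}$, so $D_{\{m\}}(n)$ equals the classical Davenport constant $D(\Z_{2^r})=2^r$, which is strictly larger than $r+1$ once $r\ge 2$. Concretely, for $n=12$, $r=2$, $m=3$, the sequence $(1,1,1)$ has $\{3\}$-weighted subsequence sums $3,6,9$, none of which is $0$ in $\Z_{12}$, so $D_{\{3\}}(12)\ge 4>r+1$. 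Thus the statement is false as written for $A=\{m\}$ when $r\ge 2$, and the correct hypothesis is $mU(n)\su A\su B$. It is worth seeing why the analogy with Theorem \ref{odd} breaks down: there the subsequence of $S'$ is a zero-sum with all weights equal to $1$ (one uses $C_{\{1\}}(2^r)\le 2^r$, not $C_{U(2^r)}(2^r)$), so multiplying by $m$ genuinely produces the single weight $m$; here the zero-sum in $\Z_{2^r}$ requires honest unit weights. The paper's own proof elides exactly this point when it asserts that $T$ is an $\{m\}$-weighted zero-sum sequence, so your instinct to isolate this step as the one needing the most care was exactly right.
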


\begin{proof}
Let $S=(x_1,\ldots,x_k)$ be a sequence in $\Z_n$ having length $k=r+1$. Let $r=v_2(n)$, $m=n/2^r$ and $S'$ denote the image of the sequence $S$ under the map $f_{n,2^r}$. By Theorem \ref{dun} we have that $D_{U(2^r)}(2^r)=r+1$. So there is a subsequence $T$ of $S$ such that the image $T'$ of $T$ under $f_{n,2^r}$ is a $U(2^r)$-weighted  zero-sum subsequence. By Lemma 7 of \cite{SKS2} we see that  the map $f_{n,2^r}$ maps $U(n)$ onto $U(2^r)$. So if $I$ denotes the set $\{i:x_i$ is a term of $T\}$, we see that for each $i\in I$ there exists $a_i\in U(n)$ such that $f_{n,2^r}\big(\sum_{i\in I} a_ix_i\big)=0$. Hence, it follows that $\sum_{i\in I} a_ix_i$ is divisible by $2^r$ and so we see that $T$ is an $\{m\}$-weighted zero-sum sequence. Thus, we see that $D_{\{m\}}(n)\leq r+1$. 

By Theorem \ref{dun} we have that $D_{U(2^r)}(2^r)=r+1$. So there is a sequence $S'$ of length $r$ in $\Z_{2^r}$ which has no $U(2^r)$-weighted zero-sum subsequence. The rest of the proof is very similar to the argument given in the second and third  paragraphs of the proof of Theorem \ref{odd}. Hence, we get that $D_A(n)=r+1$. 
\end{proof}

\begin{rem}\label{half-odd}
 %The following results follow easily from the arguments which are given in \cite{X}.
When $n$ is even and $A$ is the set of all even elements of $\Z_n$, in \cite{X} it is shown that $D_A(n)=2$ and so it follows that $C_A(n)=2$. 

When $n$ is odd and $A$ is the set of all odd (or all even) elements of $\Z_n$, in \cite{X} it is shown that $D_A(n)=3$. From the proof of this result we see that $C_A(n)=3$. 
\end{rem}

\begin{thm}\label{cubp}
Let $p$ and $k$ be odd primes such that $p\equiv 1~(mod~k)$ and $p\not\equiv 1~(mod~k^2)$. Then we have that $D_{U(p)^k}(p)\leq k$. 
\end{thm}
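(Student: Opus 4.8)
The plan is to use the cyclic structure of $U(p)$, which has order $p-1=km$ where $m=(p-1)/k$. Since $k\mid p-1$, the $k$-th power map on $U(p)$ has kernel of size $k$, so $A:=U(p)^k$ is the (unique) subgroup of $U(p)$ of order $m$, and $U(p)$ also has a unique subgroup $H$ of order $k$. I would first record three facts about this set-up. First, since $p$ is odd and $k$ is odd, $m=(p-1)/k$ is even, so $(p-1)/2=k(m/2)$ is a multiple of $k$; as $-1$ is the element of order $2$ in $U(p)$, this forces $-1\in A$. Second, because $k$ is prime we have $\gcd(k,m)=1$ if and only if $k\nmid m$, i.e. if and only if $k^2\nmid p-1$, i.e. if and only if $p\not\equiv 1~(mod~k^2)$; so under our hypothesis $H\cap A=\{1\}$ and $|HA|=|H|\,|A|=p-1$, which means $H$ is a transversal of $U(p)/A$: every coset of $A$ in $U(p)$ contains exactly one element of $H$. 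Third, if $h$ generates $H$ then $h\neq 1$ and $h^k=1$, so $h$ is a root of $X^{k-1}+\cdots+X+1$ over $\Z_p$, and hence $\sum_{e=0}^{k-1}h^e=0$ in $\Z_p$.

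With these in hand, I would argue as follows. Let $S=(x_1,\ldots,x_k)$ be any sequence of length $k$ in $\Z_p$; it suffices to exhibit a $U(p)^k$-weighted zero-sum subsequence. If some $x_i=0$, the one-term subsequence $(x_i)$ with weight $1\in A$ suffices. So assume every $x_i$ is a unit. If $x_iA=x_jA$ for some $i\neq j$, then $x_ix_j^{-1}\in A$, hence $-x_ix_j^{-1}\in A$ by the first fact, and the two-term subsequence $(x_i,x_j)$ with weights $1$ and $-x_ix_j^{-1}$ is a $U(p)^k$-weighted zero-sum, since $x_i+(-x_ix_j^{-1})x_j=0$. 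Otherwise the cosets $x_1A,\ldots,x_kA$ are pairwise distinct, hence are all $k$ cosets of $A$; for each $i$ let $h^{e_i}$ denote the unique element of $H$ lying in $x_iA$. The assignment $i\mapsto e_i$ is injective (if $e_i=e_j$ then $h^{e_i}\in x_iA\cap x_jA$ forces $x_iA=x_jA$), hence a bijection onto $\{0,1,\ldots,k-1\}$, so $\{h^{e_1},\ldots,h^{e_k}\}=H$. Setting $a_i:=x_i^{-1}h^{e_i}\in A$, the full sequence $S$ is itself a $U(p)^k$-weighted zero-sum subsequence because $\sum_{i=1}^{k}a_ix_i=\sum_{i=1}^{k}h^{e_i}=\sum_{e=0}^{k-1}h^e=0$. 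In every case $S$ has the required subsequence, so $D_{U(p)^k}(p)\leq k$.

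There is no real obstacle here beyond finding the right structural observation: the order-$k$ subgroup $H$ of $U(p)$ is at once a set of coset representatives for $A=U(p)^k$ and a set summing to zero in $\Z_p$. The hypothesis $p\not\equiv 1~(mod~k^2)$ is used exactly to obtain the internal direct product decomposition $U(p)=H\times A$ (equivalently, the transversal property), while the oddness of $k$ is used only to get $-1\in A$, which disposes of the case of a repeated coset. The three-case split could be compressed, but each case is routine as presented.
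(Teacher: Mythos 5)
Your proof is correct and follows essentially the same route as the paper: both identify $U(p)^k$ as an index-$k$ subgroup, use $p\not\equiv 1~(mod~k^2)$ to show the powers of an order-$k$ element represent all $k$ cosets, and then split into the repeated-coset case (using $-1\in U(p)^k$ since $k$ is odd) and the distinct-cosets case (using $1+c+\cdots+c^{k-1}=0$). The only cosmetic differences are that you phrase the coset structure as $H$ being a transversal of $U(p)/U(p)^k$ rather than as $[c]$ having order $k$ in the quotient, and you explicitly dispose of sequences containing $0$, which the paper tacitly skips by taking $S$ in $U(p)$.
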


\begin{proof}
As $p\equiv 1~(mod~k)$ there exists $c\in U(p)$ such that $c$ has order $k$. The subgroup $U(p)^k$ is the image of the map $U(p)\to U(p)$ given by $x\mapsto x^k$. The kernel of this map has at most $k$ elements and contains $c$. As $c$ has order $k$, it follows that the kernel is $\langle\, c \,\rangle$. Thus, we have that $|\,U(p)\,|=k\,|U(p)^k|$ and so we see that $U(p)/U(p)^k$ has order $k$.

Suppose $c\in U(p)^k$. Then there exists $a\in U(p)$ such that $c=a^k$ and so $a^{k^2}=c^k=1$. As $k$ is a prime and $a^k=c\neq 1$, we see that the order of $a$ is $k^2$. However, as $p\not\equiv 1~(mod~k^2)$ there is no element of order $k^2$ in $U(p)$. Thus, we see that $c\notin U(p)^k$. For $x\in U(p)$ if we denote the coset $xU(p)^k$ by $[x]$ we see that $[c]\neq [1]$. As $k$ is prime we see that $[c]$ has order $k$. Hence, we get a partition of $U(p)$ by the cosets $[1],\,[c],\,\ldots,\,[c^{k-1}]$.

Let $S=(x_1,\ldots,x_k)$ be a sequence in $U(p)$. Suppose we show that $S$ has a $U(p)^k$-weighted zero-sum subsequence. It will follow that $D_{U(p)^k}(p)\leq k$.

\begin{case}
There exist two elements of $S$ which are in the same coset.
\end{case}

As there exists $l\in [0,k-1]$ such that $x_i,x_j\in [c^{\,l}]$, it follows that there exist $a,b\in U(p)^k$ such that $x_i=a\,c^{\,l}$ and $x_j=b\,c^{\,l}$. Then we have that $(-b)\,x_i+a\,x_j=0$. As $k$ is odd, we see that $-1\in U(p)^k$. Hence, it follows that $(x_i,x_j)$ is a $U(p)^k$-weighted zero-sum subsequence of $S$.

\begin{case}
No two elements of $S$ are in the same coset.
\end{case}

Without loss of generality, we can assume that for each $i\in [1,k]$ there exist $a_i\in U(p)^k$ such that $x_i=a_i\,c^{\,i-1}$. Then $a_1^{-1}\,x_1+a_2^{-1}\,x_2+\ldots+a_k^{-1}\,x_k=1+c+\ldots+c^{k-1}=0$ as $c$ satisfies $X^k-1=(X-1)\,(X^{k-1}+\ldots+X+1)$ and $c\neq 1$. Thus, it follows that $S$ is a $U(p)^k$-weighted zero-sum sequence. 
\end{proof}

From Theorem 3 of \cite{AR} we see that $D_{U(p)^2}(p)=3$ where $p$ is an odd prime. This shows that Theorem \ref{cubp} is not true when $k=2$. The next result is Corollary 1 of \cite{SKS2}.

\begin{lem}
Let $F$ be a field and $A$ be a subgroup of $F^*$. A sequence $S=(x,y)$ in $F$ does not have an $A$-weighted zero-sum subsequence if and only if $x$ and $-y$ are in different cosets of $A$ in $F^*$. 
\end{lem}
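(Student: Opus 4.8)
The plan is to prove the lemma by directly enumerating the nonempty subsequences of $S=(x,y)$ — namely $(x)$, $(y)$, and $(x,y)$ — and determining exactly when each can be an $A$-weighted zero-sum subsequence. First I would dispose of the degenerate cases. If $x=0$, then $(x)$ is an $A$-weighted zero-sum subsequence since $a\cdot 0=0$ for any $a\in A$, so $S$ does have one; on the other hand $x\notin F^*$ lies in no coset of $A$, so the condition ``$x$ and $-y$ are in different cosets of $A$ in $F^*$'' fails as well, and both sides of the equivalence are false. The case $y=0$ is handled identically. Hence we may assume $x,y\in F^*$.

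Next, since $A\su F^*$, for every $a\in A$ we have $ax\neq 0$ and $ay\neq 0$; therefore neither $(x)$ nor $(y)$ is an $A$-weighted zero-sum subsequence. Consequently $S$ has an $A$-weighted zero-sum subsequence if and only if $(x,y)$ is one, i.e. if and only if there exist $a,b\in A$ with $ax+by=0$.

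The core of the argument is then the observation that such $a,b\in A$ exist if and only if $x$ and $-y$ lie in the same coset of $A$ in $F^*$. Indeed, if $ax+by=0$ with $a,b\in A$, then $x=-(a^{-1}b)\,y$, so $x/(-y)=a^{-1}b\in A$, which places $x$ and $-y$ in the same coset; conversely, if $x/(-y)=c$ for some $c\in A$, then $1\cdot x+c\cdot y=0$ with $1,c\in A$, giving the required $A$-weighted zero-sum subsequence. Note that whether or not $-1\in A$ is irrelevant, since $b/a$ ranges over all of $A$ as $a$ and $b$ do. Taking the contrapositive of the resulting equivalence yields precisely the statement of the lemma. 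I do not anticipate a genuine obstacle: the only points demanding care are the bookkeeping for the degenerate zero cases and keeping the coset computation phrased in terms of $x$ and $-y$ rather than $x$ and $y$ (a consequence of the minus sign in the relation $ax+by=0$).
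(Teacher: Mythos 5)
Your proof is correct. Note that the paper itself gives no proof of this lemma --- it is quoted as Corollary 1 of the reference \cite{SKS2} --- so there is nothing to compare against; your argument (reduce to the two-term subsequence after disposing of the zero cases, then observe that $ax+by=0$ with $a,b\in A$ is equivalent to $x/(-y)=a^{-1}b\in A$) is the standard direct verification, and your careful handling of the degenerate cases $x=0$ or $y=0$ is consistent with the statement.
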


\begin{cor}\label{field}
 Let $F$ be a field and $A$ be a proper subgroup of $F^*$. Then we have that $D_A(F)\geq 3$. 
\end{cor}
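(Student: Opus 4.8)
The plan is to deduce this directly from the Lemma immediately preceding the corollary (Corollary 1 of \cite{SKS2}), which tells us exactly when a length-two sequence in a field has no $A$-weighted zero-sum subsequence. To show $D_A(F)\geq 3$ it suffices to exhibit a sequence of length $2$ in $F$ which has no $A$-weighted zero-sum subsequence.

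First I would use the hypothesis that $A$ is a \emph{proper} subgroup of $F^*$ to pick an element $t\in F^*\setminus A$; note that this forces $|F^*|\geq 2$. Then I would take $S=(1,-t)$. Since $A\su F^*$ and both entries of $S$ are non-zero, no single-term subsequence of $S$ can be an $A$-weighted zero-sum (a product $a\cdot x$ with $a\in A$ and $x\in F^*$ is non-zero), so the only candidate for an $A$-weighted zero-sum subsequence is $S$ itself. Applying the Lemma with $x=1$ and $-y=t$: since $1\in A$ and $t\notin A$, the elements $x$ and $-y$ lie in different cosets of $A$ in $F^*$, and hence $S$ has no $A$-weighted zero-sum subsequence. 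As $S$ has length $2$, this gives $D_A(F)\geq 3$.

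There is essentially no obstacle in this argument; the only point requiring a little care is to arrange that both entries of $S$ are non-zero, so that the Lemma is applicable and the one-term subsequences are ruled out at the same time.
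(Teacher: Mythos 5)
Your proposal is correct and is essentially the argument the paper intends: the corollary is stated as a direct consequence of the preceding lemma, and taking $S=(1,-t)$ with $t\in F^*\setminus A$ is exactly the witness sequence that lemma calls for. Your extra care about both entries being non-zero (so the lemma applies and one-term subsequences are excluded) is a worthwhile observation but does not change the route.
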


\begin{cor}
Let $k\geq 2$ and $p$ be a prime such that $p\equiv 1~(mod~k)$. Then we have that $D_{U(p)^k}(p)\geq 3$.  
\end{cor}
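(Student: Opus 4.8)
The plan is to derive this immediately from Corollary \ref{field}. Since $\Z_p$ is a field with $\left(\Z_p\right)^{*}=U(p)$, it suffices to check that $U(p)^k$ is a \emph{proper} subgroup of $U(p)$; then Corollary \ref{field}, applied with $F=\Z_p$ and $A=U(p)^k$, yields $D_{U(p)^k}(p)\geq 3$ at once.

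To verify properness, recall that $U(p)$ is cyclic of order $p-1$. The set $U(p)^k$ is precisely the image of the group endomorphism $x\mapsto x^k$ of $U(p)$, so its index in $U(p)$ equals the size of the kernel of this map, which is $\gcd(k,p-1)$. The hypothesis $p\equiv 1 \pmod k$ says exactly that $k\mid p-1$, whence $\gcd(k,p-1)=k\geq 2$. Thus $\left[\,U(p):U(p)^k\,\right]=k>1$, so $U(p)^k\subsetneq U(p)$, as required.

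Alternatively, one could bypass Corollary \ref{field} and argue directly: having shown that $U(p)^k$ is proper, pick any $c\in U(p)\setminus U(p)^k$; by the preceding lemma (Corollary 1 of \cite{SKS2}), the length-two sequence $(1,-c)$ has no $U(p)^k$-weighted zero-sum subsequence, since $1\in U(p)^k$ while $-(-c)=c\notin U(p)^k$, so $1$ and $-(-c)$ lie in different cosets of $U(p)^k$ in $U(p)$. This exhibits a sequence of length $2$ with no $U(p)^k$-weighted zero-sum subsequence, so $D_{U(p)^k}(p)\geq 3$. There is no genuine obstacle in the argument; the only point that needs attention is that the hypothesis $p\equiv 1\pmod k$ is exactly what forces properness of $U(p)^k$ — without it (for instance if $\gcd(k,p-1)=1$) one would have $U(p)^k=U(p)$ and the inequality could fail.
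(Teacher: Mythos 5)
Your proof is correct and follows essentially the same route as the paper: verify that $U(p)^k$ is a proper subgroup of $U(p)$ (the paper does this via an element of order $k$ as in the first paragraph of the proof of Theorem \ref{cubp}, you via the kernel computation $\gcd(k,p-1)=k$), and then invoke Corollary \ref{field}. The alternative direct argument you sketch is just Corollary \ref{field} unpacked, so nothing genuinely different is happening there either.
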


\begin{proof}
As $p\equiv 1~(mod~k)$, there is an element in $U(p)$ of order $k$. So as in the first paragraph of the proof of Theorem \ref{cubp} we see that the index of the subgroup $U(p)^k$ of $U(p)$ is $k$. As $k\geq 2$ we see that $U(p)^k$ is a proper subgroup of $U(p)$ and so the result follows from Corollary \ref{field}.
\end{proof}

\section{Concluding remarks}

It will be interesting to characterize the extremal sequences for $U(n)$ for the $U(n)$-weighted zero-sum constants $D_{U(n)}(n)$, $C_{U(n)}(n)$ and $E_{U(n)}(n)$, when $n$ is an even number which is not a power of 2. In \cite{AMP} the $D$-extremal sequences for $U(n)$ have been characterized when $n$ is odd and the $E$-extremal sequences for $U(n)$ have been characterized when $n$ is a power of an odd prime. In \cite{SKS2} the $C$-extremal sequences for $U(n)$ have been characterized when $n$ is odd. 

In Theorem 3 of \cite{X} and Remark \ref{half-odd} it is shown that $C_A(n)=D_A(n)=3$ when $n$ is odd and $A$ is the set of all odd elements of $\Z_n$. In Theorem 3 of \cite{AR} and Theorem 4 of \cite{SKS} it is shown that $C_{Q_p}(p)=D_{Q_p}(p)=3$ where $p$ is an odd prime. This makes us curious to see if we can find a weight-set $A\su\Z_n$ where $n$ is odd such that $A$ has size $(n-1)/2$ and  we have that $D_A(n)>3$ or $C_A(n)>3$. 

\bigskip

{\bf Acknowledgement.}
Santanu Mondal would like to acknowledge CSIR, Govt. of India, for a research fellowship.

\end{document}